\documentclass[a4paper,12pt]{article}

\usepackage[margin=1.2in]{geometry}

\usepackage{amsmath}
\usepackage{amssymb}
\usepackage{algorithmic}
\usepackage{algorithm}
\usepackage{amsthm}
\usepackage{amsfonts}
\usepackage{comment}
\usepackage{graphicx}
\usepackage[hidelinks]{hyperref}
\usepackage{color}
\usepackage{bbm}
\usepackage{makecell}

\newtheorem{theorem} {Theorem}
\newtheorem{lemma} {Lemma}
\newtheorem{definition} {Definition}

\newtheorem{remark} {Remark}

\def\s{{\mathbf{s}}}
\def\d{{\mathbf{d}}}

\def\x{{\mathbf{x}}}
\def\c{{\mathbf{c}}}
\def\g{{\mathbf{g}}}
\def\e{{\mathbf{e}}}

\def\u{{\mathbf{u}}}
\def\v{{\mathbf{v}}}
\def\z{{\mathbf{z}}}
\def\w{{\mathbf{w}}}
\def\y{{\mathbf{y}}}
\def\q{{\mathbf{q}}}
\def\p{{\mathbf{p}}}
\def\b{{\mathbf{b}}}

\def\A{{\mathbf{A}}}
\def\M{{\mathbf{M}}}

\def\matE{{\mathbf{E}}}

\newcommand{\mX}{\mathcal{X}}
\newcommand{\mI}{\mathcal{I}}

\newcommand{\mQ}{\mathcal{Q}}
\newcommand{\mV}{\mathcal{V}}
\newcommand{\mP}{\mathcal{P}}

\newcommand{\mF}{\mathcal{F}}

\newcommand{\mS}{\mathcal{S}}

\newcommand{\dist}{\textrm{dist}}

\newcommand{\conv}{\textrm{conv}}

\newcommand{\rank}{\textrm{rank}}
\newcommand{\reals}{\mathbb{R}}

\DeclareMathOperator*{\argmin}{argmin}
\DeclareMathOperator*{\argmax}{argmax}

\usepackage{pifont}
\usepackage{subcaption}
\usepackage{float}

\title{Revisiting Decomposition-Invariant Conditional Gradient Methods for Polytopes}
\date{}
\author{Dan Garber \\ \small{dangar@technion.ac.il} \\ {Faculty of Data and Decision Sciences} \vspace{0pt}\\  {Technion - Israel Institute of Technology}}

\begin{document}
\maketitle

\begin{abstract}
We revisit Decomposition-Invariant Conditional Gradient methods, originally introduced by Garber and Meshi in 2016, for minimizing a convex and $\beta$-smooth function over a polytope in $\reals^n$, under an $\alpha$-quadratic growth condition. For 2-level polytopes we design a simple and parameter-free dyadic step-size rule that yields a linear convergence rate which scales with the dimension of the optimal face and not with the ambient dimension as in standard away-step-based conditional gradient methods for polytopes. For general polytopes, under a slightly stronger condition of $\alpha_{\mathrm{F}}$-\textit{facial quadratic growth}, we introduce a method whose number of iterations to reach an $\epsilon$-approximate solution is of the order $n+\frac{\beta{}D^2}{\alpha{}r^{*2}} + \frac{(d^*+1)\beta{}D^2}{\alpha_{\mathrm{F}}}\log(1/\epsilon)$, 
where $d^*$ is the dimension of the optimal face, $r^*$ is a separation parameter between the optimal set and faces that do not contain an optimal solution, and $D$ is the diameter of the polytope. This method is also parameter-free and only relies on standard line-search computations. The second result improves upon previous conditional gradient methods, whose number of iterations to $\epsilon$-approximation scales with $\frac{\beta{}D^2n}{\alpha}\log(1/\epsilon)$, in a meaningful regime $\max\{\frac{\alpha}{\alpha_{\rm F}}(d^*+1), \frac{1}{r^{*2}}\} \ll n$.
\end{abstract}

\section{Introduction}
This work considers conditional gradient methods (aka Frank-Wolfe algorithms) for the following convex optimization problem:
\begin{align}\label{eq:optProb}
\min_{\x\in\mP}f(\x),
\end{align}
where $f:\reals^n\rightarrow\reals$ is continuously differentiable and convex, and $\mP$ is a polytope in $\reals^n$ of the form:
\begin{align}\label{eq:polyStruct}
\mP := \{\x\in\reals^n~|~\A\x=\b, \x_i \geq 0 ~\forall i\in\mI\},~~~\mI\subseteq[n].
\end{align}

We let $\mV$ denote the set of vertices of $\mP$ and we denote the minimal slack of a vertex with respect to the non-negativity constraints $\xi = \min\{\v(i)~|~\v\in\mV,~i\in\mI,~\v(i) > 0\}$, i.e., for any $i\in\mI$ and any vertex $\v\in\mV$, either $\v(i) = 0$ or $\v(i) \geq \xi$. Without loss of generality, throughout this work we assume the normalization $\xi =1$. We let $\mX^*$ denote the set of optimal solutions to Problem \eqref{eq:optProb} and we let $f^*$ denote the optimal value. Throughout this work we let $\langle{\cdot,\cdot}\rangle$ and $\Vert{\cdot}\Vert$ denote the standard inner product  and the Euclidean norm, respectively. Our algorithms will be  independent of the choice of norm; the choice of the Euclidean norm will only be used for the convergence analysis \footnote{all arguments extend routinely to any norm on $\reals^n$, up to the corresponding finite-dimensional norm-equivalence constants}. We let $D$ denote the Euclidean diameter of $\mP$.  We denote the distance of a point $\x\in\reals^n$ from a closed and convex set $\mS\subset\reals^n$ by
$\mathrm{dist}(\x,\mS) = \min_{\y\in\mS}\Vert{\x-\y}\Vert$.

Throughout, and unless stated otherwise, we assume that $f$ is $\beta$-smooth and that Problem \eqref{eq:optProb} satisfies a quadratic growth bound with some parameter $\alpha > 0$, i.e.,
\begin{align}\label{eq:qg}
\forall \x\in\mP: \qquad \dist(\x,\mX^*)^2 \leq \frac{2}{\alpha}\left({f(\x) - f^*}\right).
\end{align}
Recall this condition holds in particular if $f$ is $\alpha$-strongly convex, or more generally and due to Hoffman's bound, if $f$ is of the form $f(\x) = g(\M\x)$, where $g$ is strongly convex and $\M$ is a linear map, see for instance Theorem 10 in \cite{necoara2019linear}.

Some of our results will also require the following \textit{face-wise quadratic growth condition}, which is stronger than \eqref{eq:qg}. This condition assumes there
exists a constant \(\alpha_{\rm F}>0\) such that for every face $\mF$ of $\mP$ such that $\mF\cap\mX^*\neq\emptyset$,  
\begin{align}\label{eq:fqg}
\forall \x\in\mF: \qquad  \operatorname{dist}\!\left(\x,\mX^*\cap\mF\right)^2
\le
\frac{2}{\alpha_{\rm F}}\bigl(f(\x)-f^*\bigr).
\end{align}
Note that if Problem \eqref{eq:optProb} admits a unique minimizer (e.g., when $f$ is strongly convex), one may take $\alpha_{\rm F}=\alpha$. Moreover, if $f$ is of the form $f(\x) = g(\M\x)$ with strongly convex $g$ and $\M$ is a linear map, then, as mentioned above, due to Hoffman's bound, quadratic growth holds with a positive constant with respect to any face $\mF$ of $\mP$, and since there are finitely many faces, we may take $\alpha_{\rm F}$ to be the minimum over the finitely many faces intersecting $\mX^*$. 

For any $\x\in\mP$ we let $\mI(\x)\subseteq\mI$ denote the support of $\x$ with respect to $\mI$ (i.e., $i\in\mI(\x)$ if and only if $\x(i) > 0$) and  we let 
$\mF(\x)$ denote the minimal face of $\mP$ containing $\x$, i.e., $\mF(\x) = \{\y\in\mP ~|~ \y(i) = 0~ \forall i\in\mI\setminus\mI(\x)\}$. We let $\mF^*$ denote the minimal face of $\mP$ containing the optimal set $\mX^*$ and we let $d^*$ denote its dimension, i.e., 
\begin{align*}
&d^* := \dim(\mF^*)
= n-\operatorname{rank}
\begin{bmatrix}
\A\\
\matE_{J^*}
\end{bmatrix}, \quad \textrm{where}\\
&J^* := \left\{i \in \mI :
\x^*(i)=0 \ \forall \x^* \in \mX^* \right\},
~~
\matE_{J^*} :=
\begin{bmatrix}
\e_i^\top
\end{bmatrix}_{i\in J^*}
\in \mathbb{R}^{|J^*|\times n}.
\end{align*}
In the above $\e_i$ denotes the $i$th standard basis vector in $\reals^n$ and we use the convention that $\matE_{\emptyset}$ is the empty matrix (in which case $d^* = n- \rank(\A)$).

We are interested in the complexity of conditional gradient methods (aka Frank-Wolfe algorithms) for solving Problem \eqref{eq:optProb} to a desired approximation error $\epsilon >0$ with respect to function value. Indeed for the setting described above, this family of algorithms (which accesses the polytope $\mP$ through a linear optimization oracle, and in particular does not require projections) has received significant interest in recent years. Unfortunately, due to the vast body of work on the subject, we cannot survey it thoroughly. Instead, we focus on the most relevant items. The interested reader may refer to the recent textbook \cite{braun2022conditional}. 

Under our Assumptions on Problem \eqref{eq:optProb}, the standard conditional gradient method (with line-search or a predetermined step-size schedule) is well known to reach an $\epsilon$-approximate solution within worst-case $O(\beta{}D^2/\epsilon)$ steps (where a step corresponds to one gradient computation of $f$ and one linear optimization step over the polytope $\mP$) \cite{jaggi2013revisiting, frank1956algorithm, levitin1966constrained}. Gu{\'e}lat and Marcotte \cite{guelat1986some} introduced the away-step Frank-Wolfe method and proved that under an additional \textit{strict complementarity condition} (not assumed in this work), after a finite number of iterations, the method converges linearly (i.e., the number of iterations scales with $\log(1/\epsilon)$ and not with $1/\epsilon$). However, their convergence analysis depends on the distance of the optimal solution from the boundary of the optimal face. Importantly, they did not provide a rigorous complexity bound for their method.
Garber and Hazan \cite{garber2013playing, garber2016linearly} gave the first variant of the conditional gradient method (also based on incorporating certain away steps) that converges linearly, without requiring strict complementarity and without dependence on the location of the optimal solutions, but with a number of iterations that scales linearly with the ambient dimension $n$. Shortly after, Lacoste-Julien and Jaggi \cite{lacoste2015global} presented a modern analysis for the method  of \cite{guelat1986some} and proved that it has a global linear convergence rate without assuming strict complementarity and without dependence on the location of optimal solutions, however, as in \cite{garber2013playing, garber2016linearly}, with at least linear dependence on the ambient dimension $n$. Notably, in both works, the dependence on the ambient dimension $n$ holds even when all optimal solutions lie on a low-dimensional face of the polytope $\mP$.

We mention in passing that in two recent works \cite{garber2020revisiting, garber2025accelerated} Garber revisited the linear convergence rates of the away-step Frank-Wolfe method under the \textit{strict complementarity condition}, i.e., as in the setting of  \cite{guelat1986some}, and presented explicit complexity bounds that scale with the sparsity measure $d^*$ and not with the ambient dimension, including for accelerated methods. Nevertheless, here we do not consider such strong complementarity conditions. 

Both lines of work on linearly converging conditional gradient methods surveyed above (without complementarity conditions) \cite{garber2016linearly, lacoste2015global}  rely on maintaining the current feasible iterate as an explicit convex combination of vertices of the polytope, which are in turn used to construct the away step directions. That is, on each iteration $t$, these methods maintain the current iterate $\x_t$ in the explicit form $\x_t = \sum_{\v\in\mS_t}\lambda(\v)\v$, where $\mS_t \subseteq\mV$ and $\lambda$ is in the $|\mS_t|$-dimensional simplex. The away step is then chosen as a point in $\argmax_{\v\in\mS_t}\langle{\v,\nabla{}f(\x_t)}\rangle$.

Garber and Meshi \cite{garber2016linear_memory} noted that these decomposition-dependent methods have three significant limitations: 
\begin{enumerate}
\item
they require storing the decomposition which can amount to significant memory overhead (typically $O(tn)$, where $t$ is the number of iterations executed so far),
\item
they require on each iteration to scan through all vertices in the decomposition to find the one which maximizes the inner product with the gradient (again, typically $O(tn)$ time, where $t$ is the number of iterations executed so far), 
\item
the decomposition-dependent analysis introduces a concrete obstacle towards removing the dependence on the ambient dimension $n$ in the convergence rate, and replacing it with a quantity that scales only with the sparsity of optimal solutions. See a detailed discussion in Section \ref{sec:obstacle}.
\end{enumerate}
To overcome these limitations, \cite{garber2016linear_memory} suggested that, instead of computing an away-step using an explicit decomposition, it may be preferable to compute one from an implicit decomposition using the linear optimization oracle of $\mP$ which, by denoting the lowest-dimension face of $\mP$ containing the current iterate $\x_t$ as $\mF(\x_t) = \{\x\in\mP~|~\x_t(i) = 0 \Rightarrow \x(i) =0 \forall i\in\mI\}$, takes the form:
\begin{align}\label{eq:awaystep}
\v_{t,-}\gets \argmin_{\v\in\mF(\x_t)}\langle{\v,-\nabla{}f(\x_t)}\rangle.
\end{align}
We note such an implicit computation was already suggested by Gu{\'e}lat and Marcotte \cite{guelat1986some}, however without any discussion or any analytic treatment for the benefits of this approach or the limitations it resolves.

The restriction of the $\argmin$ to $\mF(\x_t)$ guarantees that there always exists $\eta >0$ such that $\x_t(i) - \eta\v_{t,-} \geq 0$ for all $i\in\mI$, and hence such an away step maintains feasibility with respect to the nonnegativity constraints. As noted in  \cite{garber2016linear_memory}, under the polytope representation in \eqref{eq:polyStruct}, the computation in \eqref{eq:awaystep} can be implemented numerically, by computing an element in $\argmax_{\v\in\mP}\langle{\v,\widetilde{\nabla{}}f(\x_t)}\rangle$, where $\widetilde{\nabla{}}f(\x_t)$ assigns $-\infty$ (or simply a sufficiently large negative value) to each entry in the gradient that corresponds to a zero entry in $\x_t$ (and leaves other entries unchanged). Alternatively, for many polytopes of interest, each face has the same structure, and hence the same implementation of a linear optimization oracle, as that of the original polytope $\mP$, e.g., the unit simplex, the hypercube, the unit flow polytope, and the bipartite matching polytope of a bipartite graph, and thus computing Eq. \eqref{eq:awaystep} has the same structure as linear optimization over $\mP$, see discussions in \cite{garber2016linear_memory}. Since this approach does not rely on a specific decomposition, Garber and Meshi named their method \textit{decomposition-invariant conditional gradient} (DICG). Nevertheless, when the polytope $\mP$ is given by an arbitrary representation (and not necessarily the one in \eqref{eq:polyStruct}), the oracle assumed in \eqref{eq:awaystep} amounts in general to a stronger \textit{black-box optimization oracle} than the standard linear optimization oracle that optimizes with respect to the entire polytope, which is the most standard assumption in the literature on conditional gradient / Frank-Wolfe algorithms. 
In our result for general polytopes, generalizing \eqref{eq:awaystep}, and by identifying a face $\mF$ of $\mP$ with a subset of coordinates $J\subseteq\mI$ that are set to zero (i.e., $\x\in\mF$ if and only if $\x(i) =0$ for all $i\in{}J)$, we shall consider the optimization oracle:
\begin{align}\label{eq:faceConstOracle}
\argmin_{\v\in\mV\cap\mF}\langle{\v,\c}\rangle \equiv \argmin_{\v\in\mV\cap\{\x|\x(i)=0~\forall i\in{}J\}}\langle{\v,\c}\rangle,
\end{align}
for a given linear objective $\c\in\reals^n$.

We shall refer to such an oracle as a \textit{face-constrained linear optimization oracle}. Note again that, as discussed above, for the representation of the polytope in \eqref{eq:polyStruct}, implementing this oracle amounts to setting each entry $i\in{}J$ in $\c$ to  $+\infty$ and calling the standard linear optimization oracle of $\mP$ w.r.t. this modified linear objective.

Concretely, \cite{garber2016linear_memory} proved that for a \textit{2-level polytope}, that is a polytope in which for any $i\in\mI$ there exists $a_i >0$, such that for any vertex $\v\in\mV$ either $\v(i) = 0$ or $\v(i)=a_i$, by designing a specialized \textit{dyadic} step-size sequence, their DICG method, which applies pairwise updates, produces feasible iterates that converge linearly while replacing the dependence on the ambient dimension $n$ with $\rm{card}(\mX^*)$ --- an upper bound on the number of nonzero entries in any optimal solution (indeed if the optimal set lies on a low-dimensional face and all vertices are entrywise sparse, we have $\rm{card}(\mX^*) << n$). However, their step-size sequence relies on unknown quantities which are difficult to estimate such as $\alpha, \beta, \rm{card}(\mX^*)$. It is a simple observation, that was indeed made in  a follow-up work by Bashiri and Zhang \cite{bashiri2017decomposition}, that the analysis in \cite{garber2016linear_memory} in fact allows one to readily replace the entrywise sparsity quantity $\rm{card}(\mX^*)$ with the more desirable quantity $d^*$ --- the dimension of the optimal face. \cite{bashiri2017decomposition} also considered decomposition-invariant methods for general polytopes (not necessarily 2-level) which use line-search to set the step-size (and hence parameter-free), however they were only able to prove a linear convergence rate that scales with $nd^*$, i.e., worse than the decomposition-based methods \cite{garber2016linearly, lacoste2015global} by a factor of $d^*$.

More recently, Wirth, Pe{\~n}a, and Pokutta~\cite{wirth2026fast} developed an
affine-invariant error-bound framework for several Frank--Wolfe variants,
including decomposition-invariant methods with a face-constrained linear optimization oracle as in Eq. \eqref{eq:faceConstOracle}. For $2$-level
polytopes in the representation \eqref{eq:polyStruct}, their pairwise method refines the dyadic scheme of \cite{garber2016linear_memory} by requiring only the extended-curvature parameter $L$ in order to set the step-size; it nevertheless
remains parameter-dependent, and generic estimates such as
$L\leq\beta D^2$ may lead to highly conservative step-sizes. For general polytopes in the representation \eqref{eq:polyStruct}
their rates retain, in the worst case, a multiplicative dependence on the dimension of the feasible polytope, and hence
do not provide the explicit $d^*$-dependent complexity pursued here.

To conclude, despite their benefits, the current decomposition-invariant methods suffer from two major shortcomings:
\begin{enumerate}
\item
When $\mP$ is a 2-level polytope, the only provable methods with a linear convergence rate that scales with the dimension of the optimal face $d^*$ (and not with the ambient dimension), require a difficult-to-tune step-size sequence.
\item
No currently known method (even with a difficult-to-tune step-size sequence) has a linear rate that scales only with the dimension of the optimal face $d^*$ for general polytopes (i.e., not necessarily 2-level).
\end{enumerate}

This work makes progress on both issues via the following contributions:
\begin{enumerate}
\item
When $\mP$ is 2-level, we construct a simple and parameter-free dyadic step-size rule that when combined with the DICG method from \cite{garber2016linear_memory}, yields a state-of-the-art linear convergence rate that scales with $d^*$ and not with $n$. We also prove sublinear primal and dual convergence rates for this new step-size rule that do not require the quadratic growth condition.
\item
When $\mP$ is an arbitrary polytope, we construct a  method that uses standard line-search computations and is parameter-free, that requires, up to logarithmic factors, $n+\frac{\beta{}D^2}{\alpha{}r^{*2}} + \frac{(d^*+1)\beta{}D^2}{\alpha_{\mathrm{F}}}\log\frac{1}{\epsilon}$ iterations to reach an $\epsilon$-approximate solution. This rate, however, also involves the stronger facial quadratic growth condition (Eq. \eqref{eq:fqg}) and depends on the constant $r^* > 0$ which is a separation parameter between the optimal set $\mX^*$ and faces that do not contain an optimal solution. In particular, and informally, in a typical regime in which $\max\{\frac{\alpha}{\alpha_{\rm F}}(d^*+1), \frac{1}{r^{*2}}\} \ll n$, this complexity bound significantly improves over all previous ones.
The proposed method alternates between running the standard conditional gradient method and  a new \textit{face-monotone} away-step and decomposition-invariant conditional gradient method. The latter never increases the active face, which is the key to bounding the number of so-called \textit{bad away steps} (see \cite{lacoste2015global, bashiri2017decomposition}), which is in turn crucial to obtaining complexity bounds that depend only weakly (i.e., without coupling with other parameters) on the ambient dimension $n$.
\end{enumerate}

Table \ref{table:res} summarizes the above discussions.  

\begin{table*}\renewcommand{\arraystretch}{1.7}
{\small
\begin{center}

\newcolumntype{C}[1]{>{\centering\arraybackslash}m{#1}}
\begin{tabular}{ | C{8.0em}  | C{3.5em}  | C{6em} | C{4.1em} | C{12.4em} |} 
  \hline
  reference & polytope type & decomposition-invariant  &parameter-free & \#iterations to $\epsilon$ error \\
  \hline
  Garber and Hazan \cite{garber2016linearly}  & general & \ding{55} & \ding{55}  & $n\frac{\beta{}D^2}{\alpha}\log\frac{1}{\epsilon}$  \\ \hline
 Lacoste-Julien \& Jaggi \cite{lacoste2015global} & general &  \ding{55} & \ding{51} & $n\frac{\beta{}D^2}{\alpha}\log\frac{1}{\epsilon}$  \\ \hline
  Garber and Meshi \cite{garber2016linear_memory}, Bashiri and Zhang \cite{bashiri2017decomposition} &  2-level & \ding{51} & \ding{55} & $(d^*+1)\frac{\beta{}D^2}{\alpha}\log\frac{1}{\epsilon}$   \\ \hline
   Bashiri and Zhang \cite{bashiri2017decomposition}&  general & \ding{51} & \ding{51} & $n(d^*+1)\frac{\beta{}D^2}{\alpha}\log\frac{1}{\epsilon}$   \\ \hline
  Theorem \ref{thm:2levelDICG} &  2-level & \ding{51} & \ding{51} & $(d^*+1)\frac{\beta{}D^2}{\alpha}\log\frac{1}{\epsilon}$   \\ \hline
  Theorem \ref{thm:hybrid_facewise} &  general & \ding{51} & \ding{51} & $n+\frac{\beta{}D^2}{\alpha{}r^{*2}} + (d^*+1)\frac{\beta{}D^2}{\alpha_{\mathrm{F}}}\log\frac{1}{\epsilon}$   \\ \hline
 
\end{tabular}\caption{Summary of most relevant previous works and our main contributions (in simplified form). We omit here universal constants and logarithmic factors that are independent of $\epsilon$. The polytope is assumed in the representation \eqref{eq:polyStruct}. In the highly specialized cases: when every optimal solution can be expressed as a convex combination of fewer than $(d^*+1)$ vertices, or that $\mP$ is a product polytope, the parameter $d^*$ could  readily be replaced (without changes to the algorithms) with more refined parameters, see Remark \ref{remark:fewVertices} and Lemma \ref{lem:productMassTransfer} in the sequel, respectively.}\label{table:res}
\end{center}}
\vskip -0.2in
\end{table*}\renewcommand{\arraystretch}{1.5}  

The rest of the paper is organized as follows. In Section \ref{sec:prelim} we present two core technical lemmas that are at the heart of our convergence analyses. There we also demonstrate a concrete obstacle for obtaining dimension-independent linear rates for decomposition-based methods using existing analyses. In Section \ref{sec:2level} we present our result for 2-level polytopes and in
Section \ref{sec:genPoly} we present our result for general polytopes. Finally, in Section \ref{sec:numerics} we present numerical demonstrations.

\section{Technical Preliminaries}\label{sec:prelim}
Before presenting our algorithms we need two central technical tools that, in similar versions, have also played a central part in previous linearly converging conditional gradient methods for polytopes. The first is the following \textit{mass transfer lemma} which scales with the sparsity $d^*$. This argument was originally developed in \cite{garber2016linearly, garber2013playing} for a decomposition-based method, and hence it originally had an explicit dependence on the ambient dimension $n$ in the RHS of \eqref{eq:lem:L1dist}. It was later refined in \cite{garber2016linear_memory} and \cite{bashiri2017decomposition} for decomposition-invariant methods and it was shown that in such a case, the ambient dimension could be replaced with that of the optimal face $d^*$. Due to its centrality and for the sake of completeness we provide here the proof.
\begin{lemma}\label{lem:L1dist}[mass transfer lemma]
Let $\x\in\mP$ and let $\x^*\in\mX^*$. $\x$ can be written as a convex combination $\x = \sum_{i=1}^k\lambda_i\v_i$ of vertices in $\mV$ with $\lambda_i > 0$ for all $i=1,\dots,k$ such that $\x^*$ can be written as $\x^* = \sum_{i=1}^k(\lambda_i - \Delta_i)\v_i + (\sum_{i=1}^k\Delta_i)\z$ with $\Delta_i \in [0,\lambda_i]$ for all $i\in\{1,\dots,k\}$, $\z\in\mP$, and
\begin{align}\label{eq:lem:L1dist}
\sum_{i=1}^k\Delta_i \leq \sqrt{d^*+1}\Vert{\x-\x^*}\Vert.
\end{align}
\end{lemma}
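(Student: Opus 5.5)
The plan is to build the decomposition of $\x$ around a short decomposition of $\x^*$ inside the optimal face $\mF^*$, so that only $d^*+1$ "directions" ever need to be matched. Since $\x^*\in\mF^*$ and $\dim(\mF^*)=d^*$, Carath\'eodory's theorem applied inside $\mF^*$ gives $\x^*=\sum_{j=1}^{m}\mu_j\u_j$ with $m\le d^*+1$, $\mu_j>0$, $\sum_j\mu_j=1$, and affinely independent vertices $\u_j\in\mV\cap\mF^*$. I will then fix any decomposition $\x=\sum_{i=1}^k\lambda_i\v_i$ with $\lambda_i>0$, chosen to maximize the total mass that can be kept (the overlap). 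The lemma asks for a certificate that $\x^*-\sum_i(\lambda_i-\Delta_i)\v_i$ is a nonnegative multiple of a point $\z\in\mP$, with the multiple equal to $\sum_i\Delta_i$. So the statement is a transport statement: we keep mass $\lambda_i-\Delta_i$ on each $\v_i$, move mass $\sum_i\Delta_i$ to $\z$, and must show the moved mass is at most $\sqrt{d^*+1}\Vert\x-\x^*\Vert$.

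\textbf{Step 1 (split off the part of $\x$ outside $\mF^*$).} A vertex $\v\notin\mF^*$ satisfies $\v(i)\ge\xi=1$ for some $i\in J^*$, while $\x^*(i)=0$ there. The mass of $\x$ on such vertices must be moved entirely, so for those vertices I set $\Delta_i=\lambda_i$. I will bound this outside mass $\theta$ by $\Vert\x-\x^*\Vert$ times a dimension factor. The naive bound $\theta\le\sum_{i\in J^*}\x(i)$ costs $\sqrt{|J^*|}$. To avoid that, I will apply the ``$\ge 1$ or $=0$'' dichotomy only along the boundary of $\mF^*$ adjacent to the $\u_j$'s. Concretely, I will work in the affine hull of $\mF^*$ plus one normal direction, so that only $d^*+1$ coordinates are involved.

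\textbf{Step 2 (matching inside $\mF^*$).} The remaining part $\x_{\mF}$ of $\x$, normalized, lies in $\mF^*$. Here I will use barycentric coordinates with respect to the simplex $\conv\{\u_j\}$, together with a greedy pairing of $\x$'s vertices against the $\u_j$'s. Only the mass exceeding $\mu_j$ in a barycentric coordinate has to move, and $\z$ is taken to be the normalized residual $\x^*-\sum_i(\lambda_i-\Delta_i)\v_i$. Membership $\z\in\mP$ follows from convexity once every coefficient of the residual on the $\u_j$'s is shown to be nonnegative. The moved mass is then an $\ell_1$ norm of a vector with at most $d^*+1$ entries, each dominated by the corresponding coordinate difference between $\x$ and $\x^*$. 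Cauchy--Schwarz converts this $\ell_1$ norm into $\sqrt{d^*+1}$ times an $\ell_2$ norm, and the $\ell_2$ norm is controlled by $\Vert\x-\x^*\Vert$ using the normalization $\xi=1$, i.e.\ vertex entries being $0$ or at least $1$. This $\ell_1$-to-$\ell_2$ conversion on only $d^*+1$ active coordinates is exactly where $\sqrt{d^*+1}$ replaces the $\sqrt{n}$ of decomposition-based analyses.

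\textbf{Main obstacle.} The hard part is Step 1 together with the coordinate-wise domination claim in Step 2. I must show that the mass to be moved is controlled by the Euclidean distance through only $d^*+1$ quantities, not through all of $J^*$ or all of $\mI$. I would first try to take $\x$'s decomposition to be one that lives in $\mF(\x)$ and is computed greedily against the linear objective $\langle\cdot,\x-\x^*\rangle$. This should make the moved mass equal to a ratio of the form $\langle\x-\x^*,\x-\x^*\rangle/\langle\v-\u,\x-\x^*\rangle$. I would then lower-bound the denominator using the $0$/$\ge1$ structure restricted to the $d^*+1$ coordinates that pin down $\mF^*$. If that fails, the fallback is to bound the outside mass crudely but separately, and to verify that only the inside-face matching needs the $\sqrt{d^*+1}$ factor.
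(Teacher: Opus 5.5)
Your proposal does not prove the lemma: both Step 1 and Step 2 rely on mechanisms that fail, and you yourself leave them as the ``main obstacle''.

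\textbf{Where it breaks.} The starting point is correct: a Carath\'eodory decomposition $\x^*=\sum_{j=1}^m\mu_j\u_j$ with $m\le d^*+1$ and $\u_j\in\mV\cap\mF^*$.

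In Step 1, there is no set of $d^*+1$ coordinates that ``pins down'' $\mF^*$. The face is cut out by the coordinates in $J^*$, and $|J^*|$ can equal $n$ while $d^*=0$; an example is the optimal vertex $(\mathbf{0}_n,\mathbf{1}_n)$ of the lifted cube in Section~\ref{sec:obstacle}. The ``$0$ or $\geq 1$'' dichotomy holds coordinatewise on $\mI$, not along directions in ``the affine hull of $\mF^*$ plus one normal direction'', so that construction bounds nothing. Your fallback of bounding the outside mass separately and crudely costs $\sqrt{|J^*|}$, which is exactly what the lemma must avoid.

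In Step 2, barycentric coordinates with respect to $\conv\{\u_j\}$ are undefined for the in-face part of $\x$ whenever that part leaves $\mathrm{aff}\{\u_j\}$. This is typical when $m<d^*+1$, and Carath\'eodory only guarantees $m\le d^*+1$. When $m=d^*+1$ the coordinates may be negative. Moreover, the linear map between barycentric and Euclidean coordinates can be badly conditioned, already for simplices spanned by $0/1$ cube vertices. So the claimed domination of barycentric differences by coordinate differences of $\x-\x^*$ has no basis.

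Finally, if the kept vertices $\v_i$ of $\x$ are not the $\u_j$'s, nothing guarantees that the residual $\x^*-\sum_i(\lambda_i-\Delta_i)\v_i$ is a nonnegative multiple of a point of $\mP$.

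\textbf{The missing idea.} Put the $\u_j$'s themselves into the decomposition of $\x$, and measure the moved mass by the deficits of the $\u_j$'s (at most $d^*+1$ numbers) rather than by the surplus of $\x$'s vertices. The paper does this via Lemma~5.3 of \cite{garber2016linearly}:
\begin{enumerate}
\item Start from $\widetilde{\Delta}_j=\mu_j$ for all $j$, so the residual is $\x$ itself.
\item For $j=1,\dots,m$ in turn, decrease $\widetilde{\Delta}_j$ as far as possible while keeping $\x-\sum_j(\mu_j-\widetilde{\Delta}_j)\u_j\geq 0$ on $\mI$.
\item This yields $\x=\sum_j(\mu_j-\widetilde{\Delta}_j)\u_j+\widetilde{\Delta}\,\widetilde{\z}$, where $\widetilde{\Delta}:=\sum_j\widetilde{\Delta}_j$ and $\widetilde{\z}\in\mP$.
\item For every $j$ with $\widetilde{\Delta}_j>0$ there is a witness coordinate $c_j\in\mI$ with $\widetilde{\z}(c_j)=0$ and $\u_j(c_j)\geq 1$. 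Later decreases keep such a tight coordinate at zero.
\end{enumerate}
Let $C$ be the witness set, so $|C|\le m\le d^*+1$. For $c\in C$ one has $|(\x-\x^*)(c)|=\sum_j\widetilde{\Delta}_j\u_j(c)$. Hence $\Vert\x-\x^*\Vert\geq\Vert(\x-\x^*)_C\Vert\geq|C|^{-1/2}\sum_j\widetilde{\Delta}_j$. This is where your $\ell_1$-to-$\ell_2$ intuition belongs: it applies to these $\x$-dependent witness coordinates in $\mI\setminus J^*$, not to barycentric coordinates or to $J^*$.

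All mass of $\x$ outside $\mF^*$ lives in $\widetilde{\z}$ and is covered by this single bound, so no separate Step 1 is needed. To finish, decompose $\widetilde{\z}$ into vertices, all of which are moved entirely, and take $\z=\widetilde{\Delta}^{-1}\sum_j\widetilde{\Delta}_j\u_j$.
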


\begin{proof}
Throughout assume $\x \neq \x^*$. Consider writing $\x^*$ as some convex combination of vertices $\x^*=\sum_{i=1}^s \gamma_i \u_i$, for some appropriate integer $s$. Since $\x^*\in \mF^*$ and
$\dim(\mF^*)=d^*$, by Carath\'eodory's theorem, applied in the affine hull of
$\mF^*$, we may assume that $s\leq d^*+1$ and that
$\u_i\in\mV\cap\mF^*$ for all $i\in[s]$.

Applying Lemma 5.3 from \cite{garber2016linearly} (as in the proof of Lemma 2 in
\cite{garber2016linear_memory}) it follows that we can write $\x$ as
\begin{align}
\x =
\sum_{i=1}^s(\gamma_i-\widetilde{\Delta}_i)\u_i
+
\left({\sum_{i=1}^s\widetilde{\Delta}_i}\right)\widetilde{\z},
\label{eq:lem1:GHdecomp}
\end{align}
where $\widetilde{\Delta}_i\in[0,\gamma_i]$ for all $i\in[s]$,
$\widetilde{\z}\in\mP$, and for every $i$ with
$\widetilde{\Delta}_i>0$ there exists $j_i\in\mI$ such that
$\widetilde{\z}(j_i)=0$ and $\u_i(j_i)>0$. Since every coordinate indexed by $\mI$ of
a vertex is at least $\xi=1$, it follows that $\u_i(j_i)\geq 1$ for all such
$i$. Denote $C=\{j_i~|~i\in[s],~\widetilde{\Delta}_i>0\}$. We have
\begin{align*}
\Vert{\x-\x^*}\Vert^2
&=
\left\Vert{
\sum_{i=1}^s\widetilde{\Delta}_i(\u_i-\widetilde{\z})
}\right\Vert^2 \geq
\sum_{j\in C}
\left({\sum_{i=1}^s
\widetilde{\Delta}_i(\u_i(j)-\widetilde{\z}(j))}\right)^2 \\
&\underset{(a)}{=}
\sum_{j\in C}
\left({\sum_{i=1}^s\widetilde{\Delta}_i\u_i(j)}\right)^2 \underset{(b)}{\geq}
\frac{1}{|C|}
\left({\sum_{j\in C}\sum_{i=1}^s
\widetilde{\Delta}_i\u_i(j)}\right)^2 \\
&\underset{(c)}{\geq}
\frac{1}{|C|}
\left({\sum_{i=1}^s\widetilde{\Delta}_i}\right)^2 \underset{(d)}{\geq} \frac{1}{d^*+1}\left({\sum_{i=1}^s\widetilde{\Delta}_i}\right)^2,
\end{align*}
where (a) holds since, by the construction discussed above, $\widetilde{\z}(j) =0$ for all $j\in C$, (b) holds due to the ratio between the $\ell_2$ and $\ell_1$ norms, and (c) holds since, again by the construction above, for each $i\in[s]$ with $\widetilde{\Delta}_i >0$, there is some $j\in C$ such that $\u_i(j) \geq 1$, and (d) follows since $|C| \leq s \leq d^*+1$.

Rearranging, we have that
\begin{align}\label{eq:lem1:delta-bound}
\sum_{i=1}^s\widetilde{\Delta}_i \leq 
\sqrt{d^*+1}\Vert{\x-\x^*}\Vert.
\end{align}

Note that using the convex decomposition of $\x$ as in Eq.~\eqref{eq:lem1:GHdecomp}, and the bound in
Eq.~\eqref{eq:lem1:delta-bound}, it follows that we can rewrite $\x^*$ as a
convex decomposition as suggested in the lemma. Indeed, if
$\sum_{i=1}^s\widetilde{\Delta}_i=0$, then $\x=\x^*$ and the claim is
immediate. 

Otherwise, let $\widetilde{\Delta}
:=
\sum_{i=1}^s \widetilde{\Delta}_i>0$. Write 
$\widetilde{\z}
=
\sum_{\ell=1}^q \nu_\ell \w_\ell$ 
as a proper convex combination of vertices of \(\mP\), and define
$\z
:=
\frac{1}{\widetilde{\Delta}}
\sum_{i=1}^s \widetilde{\Delta}_i \u_i$.

Clearly, \(\z\in\mP\). Substituting the decomposition of
\(\widetilde{\z}\) into Eq. \eqref{eq:lem1:GHdecomp},
we obtain
\begin{align}\label{eq:lem:L1dist:1}
\x
=
\sum_{i=1}^s
\left(\gamma_i-\widetilde{\Delta}_i\right)\u_i
+
\sum_{\ell=1}^q
\widetilde{\Delta}\nu_\ell\w_\ell.
\end{align}
On the other hand, by the definition of \(\z\),
\begin{align}\label{eq:lem:L1dist:2}
\x^*
&=
\sum_{i=1}^s \gamma_i\u_i =
\sum_{i=1}^s
\left(\gamma_i-\widetilde{\Delta}_i\right)\u_i
+
\sum_{i=1}^s\widetilde{\Delta}_i\u_i \nonumber \\
&=
\sum_{i=1}^s
\left(\gamma_i-\widetilde{\Delta}_i\right)\u_i
+
\widetilde{\Delta}\z.
\end{align}
Thus, the coefficients
\(\gamma_i-\widetilde{\Delta}_i\) of the vertices \(\u_i\)
are retained, while the total mass 
$\sum_{\ell=1}^q\widetilde{\Delta}\nu_\ell
=
\widetilde{\Delta}$ 
assigned in the decomposition of \(\x\) to the vertices
\(\w_1,\ldots,\w_q\) is replaced in the decomposition of \(\x^*\)
by mass \(\widetilde{\Delta}\) assigned to the single point \(\z\). In particular, from Eq. \eqref{eq:lem:L1dist:1} and \eqref{eq:lem:L1dist:2} we obtain the decompositions of $\x$ and $\x^*$, respectively, reported in the lemma.

\end{proof}

\begin{remark}\label{remark:fewVertices}
Note that the $(d^*+1)$ factor in the bound in Lemma \ref{lem:L1dist} comes from an upper-bound on the number of vertices needed to represent some optimal solution $\x^*$ as a convex combination of vertices,  via Carath\'eodory's theorem. However, hypothetically, it may be the case that any optimal solution could be represented by a combination of at most $s^*$ vertices with $s^* \ll  d^*+1$. Indeed, in such case the $(d^*+1)$ factor in the lemma, and as a result also in all further derivations, and in particular in our convergence rates listed in Table \ref{table:res}, could be replaced with $s^*$. However, such definition of sparsity is extremely brittle. Take an optimal solution $\x^*$ which lies in the relative interior of $\mF^*$ and replace it with a point sampled uniformly from an arbitrarily small (non-empty) ball in the relative interior of $\mF^*$ centered at $\x^*$. This will almost surely produce a point that admits a convex combination of no less than $d^*+1$ vertices. We therefore regard $d^*$ as a more robust  sparsity measure.   
\end{remark}

The following lemma is a very simple refinement of Lemma \ref{lem:L1dist} and gives an improved mass-transfer bound for the special case in which the polytope $\mP$ is the Cartesian product of polytopes.
\begin{lemma}\label{lem:productMassTransfer}[mass transfer for Cartesian products]
Suppose that $\mP=\mP_1\times\cdots\times\mP_m$,
where each $\mP_j\subset\reals^{n_j}$ is a polytope of the form~\eqref{eq:polyStruct} (under the normalization $\xi=1$), and let $\mV_j$ denote its set of vertices. Write
\[
\x=(\x^{(1)},\ldots,\x^{(m)})\in\mP,
\qquad
\x^*=(\x^{*(1)},\ldots,\x^{*(m)})\in\mX^*.
\]
Since every face of a Cartesian product is a Cartesian product of faces, write
\[
\mF^*=\mF_1^*\times\cdots\times\mF_m^*,
\qquad
d_j^*:=\dim(\mF_j^*) .
\]
Then, $\x$ can be written as a convex combination
$\x=\sum_{i=1}^k\lambda_i\v_i$ of vertices in
$\mV=\mV_1\times\cdots\times\mV_m$, with $\lambda_i>0$ for all $i$, such that $\x^*$ can be written as
\[
\x^*=\sum_{i=1}^k(\lambda_i-\Delta_i)\v_i
+\left(\sum_{i=1}^k\Delta_i\right)\z,
\]
where $\Delta_i\in[0,\lambda_i]$ for all $i$, $\z\in\mP$, and 
\begin{align}\label{eq:lem:productMassTransfer}
\sum_{i=1}^k\Delta_i
&\leq
\sqrt{1+\max_{j\in[m]}d_j^*}\,
\Vert{\x-\x^*}\Vert .
\end{align}
\end{lemma}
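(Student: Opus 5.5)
The plan is to apply the argument of Lemma \ref{lem:L1dist} separately in each block $\mP_j$, obtaining a per-block mass transfer bound in terms of $d_j^*$. I would then glue the block decompositions into a single decomposition over product vertices $\mV=\mV_1\times\cdots\times\mV_m$, using a coupling in which the transferred mass is $\max_j$ of the per-block masses rather than their sum. The dimension factor then becomes $1+\max_j d_j^*$, and the distance factor is controlled by $\max_j\Vert{\x^{(j)}-\x^{*(j)}}\Vert\le\Vert{\x-\x^*}\Vert$.

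\textbf{Step 1 (per-block decomposition).} Fix $j$. The point $\x^{*(j)}$ lies in $\mF_j^*$, which has dimension $d_j^*$, so by Carath\'eodory it is a convex combination of at most $d_j^*+1$ vertices of $\mF_j^*$. The proof of Lemma \ref{lem:L1dist} uses only this fact, Lemma 5.3 of \cite{garber2016linearly}, and $\xi=1$, all of which hold for $\mP_j$. Running it verbatim in $\mP_j$ with the pair $(\x^{(j)},\x^{*(j)})$ yields nonnegative measures on $\mV_j$:
\begin{align*}
\x^{(j)}=\c_j+\delta_j\,\mathbf{r}_j,\qquad \x^{*(j)}=\c_j+\delta_j\,\z_j .
\end{align*}
Here $\c_j$ is the retained part, identified with a measure of total mass $1-\delta_j$, while $\mathbf{r}_j,\z_j\in\mP_j$, and
\begin{align*}
\delta_j\le\sqrt{d_j^*+1}\,\Vert{\x^{(j)}-\x^{*(j)}}\Vert .
\end{align*}

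\textbf{Step 2 (equalizing masses).} Let $\Delta:=\max_j\delta_j$; the case $\Delta=0$ is trivial. In each block, pick a sub-measure $\c_j'\le\c_j$ of mass exactly $1-\Delta$, for instance the scaling $\c_j'=\frac{1-\Delta}{1-\delta_j}\c_j$. Moving the excess $\c_j-\c_j'$ into both remainders gives
\begin{align*}
\x^{(j)}=\c_j'+\Delta\,\mathbf{r}_j',\qquad \x^{*(j)}=\c_j'+\Delta\,\z_j',
\end{align*}
with $\mathbf{r}_j',\z_j'\in\mP_j$, since each is a convex combination of $\mathbf{r}_j$ (resp.\ $\z_j$) and the normalized excess measure. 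The vertex measure for $\mathbf{r}_j'$ is the excess measure plus $\delta_j$ times a vertex decomposition of $\mathbf{r}_j$.

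\textbf{Step 3 (product coupling).} Define a measure on $\mV$ as $(1-\Delta)$ times the product of the normalized measures $\c_j'/(1-\Delta)$, plus $\Delta$ times the product of the normalized vertex measures of the $\mathbf{r}_j'$. Its block-$j$ marginal is $\c_j'+\Delta\mathbf{r}_j'=\x^{(j)}$, so it is a convex decomposition $\x=\sum_i\lambda_i\v_i$ with $\v_i\in\mV$.

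Now set $\Delta_i$ equal to the weight of $\v_i$ coming from the second product term, so that $\Delta_i\in[0,\lambda_i]$ even when a vertex appears in both terms and weights are merged. Set $\z:=(\z_1',\dots,\z_m')\in\mP$. The first product term has block-$j$ marginal $\c_j'$, hence
\begin{align*}
\x^*=\sum_i(\lambda_i-\Delta_i)\v_i+\Big(\sum_i\Delta_i\Big)\z ,
\end{align*}
with $\sum_i\Delta_i=\Delta=\delta_{j_0}$ for some block $j_0$. Therefore
\begin{align*}
\sum_i\Delta_i\le\sqrt{1+d^*_{j_0}}\,\Vert{\x^{(j_0)}-\x^{*(j_0)}}\Vert\le\sqrt{1+\max_j d_j^*}\,\Vert{\x-\x^*}\Vert .
\end{align*}

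I expect the main obstacle to be the gluing in Steps 2--3. A naive combination of the block decompositions would add the transferred masses over blocks, or pick up a Carath\'eodory count of order $\sum_j d_j^*$ in $\mF^*$; either would lose the improvement. The key points to verify carefully are that the retained parts can all be trimmed to the common mass $1-\Delta$ without leaving $\mP_j$, and that the marginals of the coupled measure reproduce both $\x$ and $\x^*$ exactly.
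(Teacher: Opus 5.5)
Your proposal is correct and follows the paper's proof essentially step for step: you apply Lemma \ref{lem:L1dist} in each block, equalize the transferred mass to $\max_j \widetilde{\Delta}_j$ by shifting part of each retained point $\q_j$ into the remainders, and take the Cartesian product. Your product-measure coupling in Step 3 is simply an explicit version of the paper's final step, which decomposes the product points $\q$ and $\widetilde{\z}$ into vertices of $\mP$.
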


\begin{proof}
Applying the steps of the proof of Lemma \ref{lem:L1dist} separately to each factor
$\mP_j$, with $\x^{(j)}$, $\x^{*(j)}$ in place of
$\x$, $\x^*$, respectively, and using the notation
of Eqs.~\eqref{eq:lem1:GHdecomp}--\eqref{eq:lem:L1dist:2}, for every
$j\in[m]$ we obtain vertices $\u_{j,1},\ldots,\u_{j,s_j}\in
\mV_j$, coefficients $\gamma_{j,i}$ and
$\widetilde{\Delta}_{j,i}\in[0,\gamma_{j,i}]$, and points
$\widetilde{\z}_j, \z_j\in\mP_j$ such that, denoting
$\widetilde{\Delta}_j:=\sum_{i=1}^{s_j}\widetilde{\Delta}_{j,i}$,
\begin{align}
\x^{(j)}
&=
\sum_{i=1}^{s_j}
\left(\gamma_{j,i}-\widetilde{\Delta}_{j,i}\right)\u_{j,i}
+
\widetilde{\Delta}_j\widetilde{\z}_j,
\label{eq:productMassTransfer:factor-x}
\\
\x^{*(j)}
&=
\sum_{i=1}^{s_j}
\left(\gamma_{j,i}-\widetilde{\Delta}_{j,i}\right)\u_{j,i}
+
\widetilde{\Delta}_j\z_j,
\label{eq:productMassTransfer:factor-xstar}
\end{align}
and
\begin{align}
\widetilde{\Delta}_j
\leq
\sqrt{d_j^*+1}\Vert{\x^{(j)}-\x^{*(j)}}\Vert \leq \sqrt{d_j^*+1}\Vert{\x-\x^{*}}\Vert.
\label{eq:productMassTransfer:factor-bound}
\end{align}
In particular, after normalizing the common first term in
Eqs.~\eqref{eq:productMassTransfer:factor-x} and
\eqref{eq:productMassTransfer:factor-xstar}, we may write
\begin{align}
\x^{(j)}
&=(1-\widetilde{\Delta}_j)\q_j
+\widetilde{\Delta}_j\widetilde{\z}_j,
\qquad
\x^{*(j)}
=(1-\widetilde{\Delta}_j)\q_j
+\widetilde{\Delta}_j\z_j,
\label{eq:productMassTransfer:common-part}
\end{align}
where $\q_j\in\mP_j$. 

The cases
$\widetilde{\Delta}_j\in\{0,1\}$ are interpreted in the evident way, by
choosing the point multiplying a zero coefficient arbitrarily in the
corresponding set. 
Set
$\widetilde{\Delta}:=\max_{j\in[m]}\widetilde{\Delta}_j$. If
$\widetilde{\Delta}=0$, then $\x=\x^*$ and the claim is immediate. Otherwise,
for every $j\in[m]$, Eq.~\eqref{eq:productMassTransfer:common-part} can clearly be
rewritten as
\begin{align*}
\x^{(j)}
&=(1-\widetilde{\Delta})\q_j
+\widetilde{\Delta}
\left(
\left(1-\frac{\widetilde{\Delta}_j}{\widetilde{\Delta}}\right)\q_j
+\frac{\widetilde{\Delta}_j}{\widetilde{\Delta}}\widetilde{\z}_j
\right),\\
\x^{*(j)}
&=(1-\widetilde{\Delta})\q_j
+\widetilde{\Delta}
\left(
\left(1-\frac{\widetilde{\Delta}_j}{\widetilde{\Delta}}\right)\q_j
+\frac{\widetilde{\Delta}_j}{\widetilde{\Delta}}\z_j
\right).
\end{align*}
Note the points in parentheses belong to $\mP_j$. Thus, all factors now have the same
transferred mass $\widetilde{\Delta}$. Taking the Cartesian product of these
decompositions gives
\begin{align*}
\x=(1-\widetilde{\Delta})\q
+\widetilde{\Delta}\widetilde{\z},
\qquad
\x^*=(1-\widetilde{\Delta})\q
+\widetilde{\Delta}\z,
\end{align*}
for some $\q,\z,\widetilde{\z}\in\mP$. 

Decomposing
$\q$ and $\widetilde{\z}$ into vertices of $\mP$, exactly as in
Eqs.~\eqref{eq:lem:L1dist:1}--\eqref{eq:lem:L1dist:2}, yields the
representation in the statement with
$\sum_{i=1}^k\Delta_i=\widetilde{\Delta} =\max_{j\in[m]}\widetilde{\Delta}_j$. The bound in the lemma follows from Eq. \eqref{eq:productMassTransfer:factor-bound}.
\end{proof}

As an example for the strength of Lemma \ref{lem:productMassTransfer} consider the $[0,1]^n$ hypercube in the representation \eqref{eq:polyStruct}:
\[
\mP_{\rm cube}
=\{(\x,\s)\in\reals^n\times\reals^n:\x+\s=\mathbf{1}_n,\ \x,\s\geq0\}
=(\Delta_2)^n,
\]
where $\Delta_2=\{(u_1,u_2)\in\reals_+^2:u_1+u_2=1\}$. Since the dimension of a face of $\Delta_2$ is at most $1$, 
 the construction in Lemma~\ref{lem:productMassTransfer} gives
$\sum_{i=1}^k\Delta_i \leq \sqrt{2}\Vert{\x-\x^*}\Vert$. 
Hence we obtain a bound that is independent of both $n$ and $d^*$. 

Note however, that even a single equality constraint the couples coordinates can completely destroy such independence of $d^*,n$. For the unit simplex in $\reals^n$ (an intersection of the unit cube with a single linear equality), such a bound fails if we take for instance $\x = \sum_{i=1}^{n/2}\frac{2}{n}\e_i$ and $\x^* = \sum_{j=n/2+1}^{n}\frac{2}{n}\e_j$ (assuming $n$ is even), in which case we only have the bound $\sum_{i=1}^k\Delta_i = 1 =  \frac{\sqrt{n}}{2}\Vert{\x-\x^*}\Vert$. 

\begin{remark}
While as demonstrated above, the specialization of Lemma \ref{lem:L1dist} to product polytopes can be very significant, we view this as a highly specialized case, and hence throughout the following, unless stated otherwise, we use the generic bound in  Lemma \ref{lem:L1dist} in our complexity result in order to flesh out clearly the typical expected dependence on the sparsity of optimal solutions. Nevertheless, it should be understood that whenever considering product polytopes, the $(1+d^*)$ factor, originating from Lemma \ref{lem:L1dist}, could be readily replaced with the factor $(1+\max_{j\in[m]}d_j^*)$ from Lemma \ref{lem:productMassTransfer} in all further derivations.
\end{remark}

Our second central technical tool is the following lemma, which builds on Lemma \ref{lem:L1dist}, and establishes that a face-constrained away vertex ($\v_{-}$) and the so-called (and potentially face-constrained) Frank-Wolfe vertex ($\v_{+}$) can lead to sufficient descent directions which are tied to the dimension of the optimal face $d^*$ and not the ambient dimension $n$. This lemma is a core technical argument that allows us to get the linear rates which scale with $d^*$ instead of $n$.

\begin{lemma}\label{lem:pairwiseLB}[face-constrained descent directions]
Let $\x\in\mP$ and let $\x^*\in\mX^*, \x^*\neq \x$. Let $\mF$ be a face of $\mP$ such that $\x^*\in\mF$ and let
 \begin{align*}
\v_{+}\in\argmin_{\v\in\mV\cap\mF}\langle{\v,\nabla{}f(\x)}\rangle, \qquad \v_{-}\in\argmax_{\v\in\mV\cap\mF(\x)}\langle{\v,\nabla{}f(\x)}\rangle.  	
 \end{align*}
Then, 
\begin{align*}
\langle{\v_{-}-\x,\nabla{}f(\x)}\rangle + \langle{\x-\v_{+},\nabla{}f(\x)}\rangle \geq \frac{\left({f(\x)-f^*}\right)}{\sqrt{d^*+1}\Vert{\x-\x^*}\Vert}.
\end{align*}
\end{lemma}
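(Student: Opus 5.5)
The plan is to combine convexity of $f$ with the decomposition produced by Lemma \ref{lem:L1dist}, using the specific construction of $\z$ from its proof rather than only its statement. First, by convexity, $f(\x)-f^*\leq \langle{\x-\x^*,\nabla f(\x)}\rangle$. Next, I invoke Lemma \ref{lem:L1dist} for the pair $\x,\x^*$. This writes $\x=\sum_{i=1}^k\lambda_i\v_i$ with $\lambda_i>0$ and $\x^*=\sum_{i=1}^k(\lambda_i-\Delta_i)\v_i+\Delta\z$, where $\Delta:=\sum_i\Delta_i$. Subtracting the two representations gives
\begin{align*}
\x-\x^* = \sum_{i=1}^k\Delta_i\v_i - \Delta\z,
\end{align*}
and therefore $f(\x)-f^*\leq \sum_i\Delta_i\langle{\v_i,\nabla f(\x)}\rangle - \Delta\langle{\z,\nabla f(\x)}\rangle$. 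Note that $\Delta>0$, since otherwise $\x=\x^*$.

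The two terms are then bounded by the away and Frank--Wolfe vertices, respectively. Each $\v_i$ appears with strictly positive weight in a convex decomposition of $\x$, so it lies in the minimal face $\mF(\x)$. Since $\v_-$ maximizes $\langle{\cdot,\nabla f(\x)}\rangle$ over $\mV\cap\mF(\x)$, we get $\langle{\v_i,\nabla f(\x)}\rangle\leq\langle{\v_-,\nabla f(\x)}\rangle$ for every $i$.

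For $\z$, the statement of Lemma \ref{lem:L1dist} only gives $\z\in\mP$, which is not enough. Here I reopen the proof: there $\z=\widetilde{\Delta}^{-1}\sum_i\widetilde{\Delta}_i\u_i$, where the $\u_i$ are vertices in a convex decomposition $\x^*=\sum_i\gamma_i\u_i$. Only indices with $\widetilde{\Delta}_i>0$ contribute, and these have $\gamma_i\geq\widetilde{\Delta}_i>0$. Hence each such $\u_i$ lies in $\mF(\x^*)$. Because $\mF$ is a face containing $\x^*$, we have $\mF(\x^*)\subseteq\mF$. Thus $\z$ is a convex combination of vertices in $\mV\cap\mF$, which gives $\langle{\z,\nabla f(\x)}\rangle\geq\langle{\v_+,\nabla f(\x)}\rangle$.

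Combining these bounds yields
\begin{align*}
f(\x)-f^*\leq \Delta\,\langle{\v_--\v_+,\nabla f(\x)}\rangle .
\end{align*}
Since the left-hand side is nonnegative and $\Delta>0$, the inner product is nonnegative. We may therefore apply the bound $\Delta\leq\sqrt{d^*+1}\Vert{\x-\x^*}\Vert$ from Lemma \ref{lem:L1dist}. Noting that $\langle{\v_--\x,\nabla f(\x)}\rangle+\langle{\x-\v_+,\nabla f(\x)}\rangle=\langle{\v_--\v_+,\nabla f(\x)}\rangle$ and dividing gives the claim.

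The main obstacle is the step showing $\z\in\mF$. This is exactly where the face-constrained Frank--Wolfe vertex must dominate $\z$, and it requires the structural fact, visible only inside the proof of Lemma \ref{lem:L1dist}, that $\z$ is built from vertices of a decomposition of $\x^*$ with positive weights. The remaining steps are routine.
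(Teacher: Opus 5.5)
Your proposal is correct and follows the paper's proof essentially step for step. The steps match: convexity, the decomposition from Lemma \ref{lem:L1dist}, bounding each $\langle \v_i,\nabla f(\x)\rangle$ by the away vertex and $\langle \z,\nabla f(\x)\rangle$ by the face-constrained Frank--Wolfe vertex, and then using $\Delta\le\sqrt{d^*+1}\Vert{\x-\x^*}\Vert$.

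The one difference is how you get $\z\in\mF$. Reopening the proof of Lemma \ref{lem:L1dist} works, but you are wrong that the statement is not enough; the paper's footnote uses the statement alone. Since $\x^*\in\mF$ is a convex combination of points of $\mP$ that gives weight $\Delta>0$ to $\z$, every coordinate $j$ fixed to zero on $\mF$ satisfies $0=\x^*(j)\geq\Delta\z(j)\geq 0$. This forces $\z(j)=0$, so $\z\in\mF$.
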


\begin{proof}
Denote $h=f(\x)-f^*$ and $\g=\nabla{}f(\x)$. 
From Lemma \ref{lem:L1dist}, we can write
$\x=\sum_{i=1}^k\lambda_i\v_i$ with $\v_i\in\mV\cap\mF(\x)$ and
$\lambda_i>0$, such that
\[
\x^*
=
\sum_{i=1}^k(\lambda_i-\Delta_i)\v_i
+
\left({\sum_{i=1}^k\Delta_i}\right)\z,
\]
where $\z\in\mF$ \footnote{while Lemma \ref{lem:L1dist} simply states $\z\in\mP$, it is a straightforward concequence that if $\x^*\in\mF$ then $\z$ must also be in $\mF$} , $\Delta_i\in[0,\lambda_i]$, and
\[
\sum_{i=1}^k\Delta_i
\leq
\sqrt{d^*+1}\Vert{\x-\x^*}\Vert .
\]
Denote $\Delta=\sum_{i=1}^k\Delta_i$. By convexity of $f$, $h \leq \langle{\x-\x^*,\g}\rangle $.
Using the above decompositions of $\x$ and $\x^*$, we get
\[
\langle{\x-\x^*,\g}\rangle
=
\sum_{i=1}^k\Delta_i\langle{\v_i-\z,\g}\rangle .
\]
Since $\v_i\in\mF(\x)$ for all $i$ and $\z\in\mF$ , and since $\v_{+}$ and $\v_{-}$ are
the minimum and maximum vertices over $\mV\cap\mF$ and $\mV\cap\mF(\x)$ with respect to $\g$, respectively, we have that,
\begin{align*}
h \leq \Delta\langle{\v_{-}-\v_{+},\g}\rangle = \Delta\left({\langle{\v_{-}-\x,\g}\rangle + \langle{\x-\v_{+},\g}\rangle}\right).
\end{align*}
Combining the last inequalities indeed yields
\[
\langle{\v_{-}-\x,\g}\rangle
+
\langle{\x-\v_{+},\g}\rangle
\geq
\frac{h}{\Delta},
\]
and the lemma follows.
\end{proof}

\subsection{An obstacle for sparsity-dependent rates for decomposition-based methods}\label{sec:obstacle}

Now that we have presented the two core technical lemmas \ref{lem:L1dist} and \ref{lem:pairwiseLB}, which give bounds that scale only with the sparsity measure $d^*$ and not with the ambient dimension $n$ (which is precisely the reason  we can get convergence rates that scale with $d^*$ as presented in Table \ref{table:res}), we show how these bypass the inherent limitations in the analyses of previous decomposition-based methods \cite{garber2016linearly, lacoste2015global}. We emphasize that below we do not presume to give sparsity-based lower bounds on decomposition-based methods. Rather, we show that the existing architectures for deriving linear convergence rates are not compatible with sparsity-based bounds.

Consider the problem of minimizing the squared Euclidean norm over the hypercube $[0,1]^n$. This could be written in the form \eqref{eq:polyStruct} as:
\begin{align*}
&\min_{(\x,\y)}\{f(\x,\y)=\frac{1}{2}\Vert{\x}\Vert^2\} \\ 
&\textrm{s.t.} ~ (\x,\y)\in\mP_{\rm{cube}} := \{(\z,\w)\in\reals^{n}\times\reals^n~|~\z \geq 0, \w\geq 0, ~\z+\w = \mathbf{1}_n\}.
\end{align*}

Clearly the vertices of $\mP_{\rm{cube}}$ are given by 
\begin{align*}
\mV_{\rm{cube}} = \{(\v, \mathbf{1}-\v)~|~\v\in\{0,1\}^n\}, 
\end{align*}
and the unique minimizer is $(\x^*,\y^*) = (\mathbf{0}_n, \mathbf{1}_n)$, and clearly $f^* = 0$. Note here $(\x^*,\y^*)$ is a vertex and thus in particular $d^* = 0$.

Now fix $\epsilon\in(0,1)$ and consider the point $(\x_{\epsilon}, \y_{\epsilon})\in\mP_{\rm{cube}}$ given by the convex combination:
\begin{align}\label{eq:decomProb:1}
(\x_{\epsilon}, \y_{\epsilon}) = (1-\epsilon)(\x^*,\y^*) + \frac{\epsilon}{n}\sum_{i=1}^n(\e_i, \mathbf{1}_n-\e_i).
\end{align}
A simple calculation yields that
\begin{align*}
&\Vert{(\x_{\epsilon}, \y_{\epsilon}) - (\x^*,\y^*)}\Vert^2 =  \frac{2\epsilon^2}{n}, \qquad f(\x_{\epsilon}, \y_{\epsilon}) = \frac{1}{2}\Vert{\x_{\epsilon}}\Vert^2 = \frac{\epsilon^2}{2n}, \\
&\nabla{}f(\x_{\epsilon}, \y_{\epsilon}) = (\x_{\epsilon}, \mathbf{0}_n) = \left({\frac{\epsilon}{n}\mathbf{1}_n, \mathbf{0}_n}\right).
\end{align*}

While the core function value descent argument in both \cite{garber2016linearly} and \cite{lacoste2015global} is essentially very similar, they are presented somewhat differently. In \cite{garber2016linearly} the main argument is as in Lemma \ref{lem:L1dist} (see Lemma 5.5 in  \cite{garber2016linearly}): 
an upper bound on the overall mass that needs to be transferred from a decomposition of an input point, say $(\x_{\epsilon}, \y_{\epsilon})$, where the decomposition is specific in their case (and not existential as in our Lemma \ref{lem:L1dist}) to some decomposition of the optimal point $(\x^*,\y^*)$, which in our case is unique since it is a vertex. Considering the decomposition in \eqref{eq:decomProb:1} it is clear that a total mass of $\Delta = \epsilon$ needs to be transferred (all the mass from the vertices $(\e_i, \mathbf{1}_n-\e_i), i=1,\dots,n$). However, this yields,
\begin{align*}
\Delta = \epsilon = \sqrt{\frac{n}{2}} \Vert{(\x_{\epsilon}, \y_{\epsilon}) - (\x^*,\y^*)}\Vert,
\end{align*}
which is worse by a factor $\Theta(\sqrt{n})$ than the bound in our decomposition-invariant Lemma \ref{lem:L1dist} (since here $d^*=0$), or Lemma \ref{lem:productMassTransfer}.

Moving to the decomposition-based away step Frank-Wolfe analysis in \cite{lacoste2015global}, their core argument relies on establishing that there exists a polytope-dependent constant $C$ such that for any query point given by a specific decomposition, say   $(\x_{\epsilon}, \y_{\epsilon})$, it holds that
\begin{align}
\langle{\v_- - \v_+, \nabla{}f(\x_{\epsilon}, \y_{\epsilon})}\rangle \geq C\frac{f(\x_{\epsilon}, \y_{\epsilon})- f^*}{\Vert{(\x_{\epsilon}, \y_{\epsilon}) - (\x^*,\y^*)}\Vert},
\end{align}
where $\v_-$ is any vertex in the decomposition \eqref{eq:decomProb:1} that maximizes the inner product with $\nabla{}f(\x_{\epsilon}, \y_{\epsilon})$, and $\v_+$ is the standard FW vertex, i.e., $\v_+\in\argmin_{\v\in\mV_{\rm{cube}}}\langle{\v, \nabla{}f(\x_{\epsilon}, \y_{\epsilon})}\rangle$, see Theorem 3 in \cite{lacoste2015global} (see also Lemma 2.26 in \cite{braun2022conditional}).

From the decomposition in \eqref{eq:decomProb:1} we have that $\v_{-} = (\e_i, \mathbf{1}_n-\e_i)$ for some $i\in[n]$. Also, we clearly have that  $\v_{+} = (\x^*,\y^*)$. This gives,
\begin{align*}
\langle{\v_{-} -\v_{+},\nabla{}f(\x_{\epsilon}, \y_{\epsilon})}\rangle = \frac{\epsilon}{n} = \frac{4}{\sqrt{2n}}\frac{f(\x_{\epsilon}, \y_{\epsilon}) - f^*}{\Vert{(\x_{\epsilon}, \y_{\epsilon}) - (\x^*,\y^*)}\Vert}.
\end{align*}
Thus, we must have that the constant $C$ satisfies $C \leq \frac{4}{\sqrt{2n}}$, which is indeed again worse by a factor $\Theta(\sqrt{n})$ than the bound implied from the decomposition-invariant Lemma \ref{lem:pairwiseLB}, when instantiated with the point $(\x_{\epsilon}, \y_{\epsilon})$ and the face $\mF = \mP_{\rm{cube}}$.

\section{Algorithm for 2-Level Polytopes}\label{sec:2level}
In this section we focus on the special case that $\mP$ is a 2-level polytope, i.e., for any $\v\in\mV$ and $i\in{}\mI$, if $\v_i > 0$ then $\v_i = a_i$ for some scalar $a_i > 0$. Without loss of generality, we assume the scaling $a_i = 1$, i.e., $\v(i)\in\{0,1\}$ for any $\v\in\mV$ and $i\in \mI$.

Our algorithm for this setting, given as Algorithm \ref{alg:DICG} below, is the same as the DICG method of \cite{garber2016linear_memory}, only that instead of their dyadic step-size rule, which depends on a pre-specified step-size sequence, here we construct a monotone dyadic step-size rule based only on evaluations of the objective function $f$.

\begin{algorithm}
\begin{algorithmic}
\caption{Pairwise Decomposition-Invariant Conditional Gradient with Dyadic Step Sizes for 2-Level Polytopes}\label{alg:DICG}
\STATE input: $\x_1$ --- some vertex in $\mV$
\STATE $\eta \gets 1$
\FOR{$t=1,2,\dots$}
\STATE $\v_{t,+} \gets\arg\min_{\v\in\mV}\langle{\v,\nabla{}f(\x_t)}\rangle$ \COMMENT{Frank-Wolfe vertex}
\IF{$\langle{\x_t - \v_{t,+}, \nabla{}f(\x_t)}\rangle = 0$}
\RETURN
\ENDIF
\STATE $\v_{t,-} \gets\arg\max_{\v\in\mV\cap\mF(\x_t)}\langle{\v,\nabla{}f(\x_t)}\rangle$ \COMMENT{away vertex}
\STATE $i \gets $ smallest integer such that $2^{-i} \leq \eta$ and $f(\x_t + 2^{-i}(\v_{t,+}-\v_{t,-})) < f(\x_t)$
\STATE $\eta \gets 2^{-i}$, $\eta_t \gets \eta / 2$
\STATE $\x_{t+1} \gets \x_t + \eta_t(\v_{t,+}-\v_{t,-})$
\ENDFOR
\end{algorithmic}
\end{algorithm}

The proof of the following lemma is essentially a straightforward consequence of Lemma 1 in \cite{garber2016linear_memory}, however, for the sake of completeness we provide a complete proof in the appendix.
\begin{lemma}\label{lem:dicg:feas}
The iterates of Algorithm \ref{alg:DICG} are always feasible. 
\end{lemma}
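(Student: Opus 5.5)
We argue by induction on $t$ that $\x_t\in\mP$. We strengthen the hypothesis: every coordinate $\x_t(i)$, $i\in\mI$, is an integer multiple of the current dyadic scale. Precisely, since the accepted values $\eta$ are nonincreasing powers of two, the claim is that after iteration $t-1$ each $\x_t(i)$ is an integer multiple of $\eta_{t-1}$, the last step size used. For $t=1$ this holds because $\x_1$ is a vertex of a 2-level polytope, so $\x_1(i)\in\{0,1\}$, and any $\eta_0\le 1$ works.

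First, the equality constraints are easy. Both $\v_{t,+}$ and $\v_{t,-}$ are vertices of $\mP$, so $\A(\v_{t,+}-\v_{t,-})=\mathbf{0}$ and hence $\A\x_{t+1}=\A\x_t=\b$. It remains to check nonnegativity for $i\in\mI$. Since $\v_{t,+}(i)\ge 0$, we have $\x_{t+1}(i)\ge \x_t(i)-\eta_t\v_{t,-}(i)$. If $\v_{t,-}(i)=0$ there is nothing to prove. Otherwise $\v_{t,-}(i)=1$ by the 2-level normalization. Because $\v_{t,-}\in\mF(\x_t)$, any coordinate in the support of $\v_{t,-}$ lies in $\mI(\x_t)$, so $\x_t(i)>0$.

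The key step is to show $\x_t(i)\ge \eta_t$ whenever $\x_t(i)>0$. By the induction hypothesis, $\x_t(i)$ is a positive integer multiple of $\eta_{t-1}$, so $\x_t(i)\ge\eta_{t-1}$. The step-size rule picks $2^{-i}\le\eta$, where $\eta=2\eta_{t-1}$ is the previous accepted value, and sets $\eta_t=2^{-i}/2\le\eta_{t-1}$. Hence $\x_t(i)\ge\eta_t$, so $\x_{t+1}(i)\ge 0$.

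To close the induction, note that $\eta_t$ divides $\eta_{t-1}$, both being powers of two with $\eta_t\le\eta_{t-1}$. So every $\x_t(i)$ is an integer multiple of $\eta_t$. Also $\v_{t,\pm}(i)\in\{0,1\}$ are multiples of $\eta_t$, since $\eta_t\le 1/2$. Therefore $\x_{t+1}(i)$ is again an integer multiple of $\eta_t$.

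The main obstacle is bookkeeping the invariant correctly across the halving $\eta_t=\eta/2$ and the initial condition. We must make sure $\eta$ never increases, which holds because $2^{-i}\le\eta$ is enforced. We must also make sure the search for $i$ terminates. It does, because $\v_{t,+}-\v_{t,-}$ is a descent direction by Lemma \ref{lem:pairwiseLB}, given that the loop did not return. Termination, however, is not actually needed for feasibility: if no such $i$ exists, the iterates simply stay put.
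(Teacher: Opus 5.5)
Your proof is correct and is essentially the paper's argument. Both carry by induction the invariant that the $\mI$-coordinates of $\x_t$ are nonnegative integer multiples of the nonincreasing dyadic step size, so every coordinate in the support of $\v_{t,-}\in\mF(\x_t)$ is at least $\eta_t$ and the pairwise update stays nonnegative, while $\A(\v_{t,+}-\v_{t,-})=\mathbf{0}$ preserves the equalities. The only blemish is the base case: ``any $\eta_0\le 1$'' should be pinned to $\eta_0:=1/2$, consistent with the initial value $\eta=1=2\eta_0$, whereas the paper simply indexes the invariant by $\eta_t$ itself, using $\eta_1\le 1/2$ and $\x_1(i)\in\{0,1\}$.
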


Throughout our analysis of Algorithm \ref{alg:DICG} we will use the following notation for any iteration $t\geq 1$:
\begin{eqnarray}\label{eq:notation}
&h_t=f(\x_t)-f^*,\qquad \nabla_t=\nabla f(\x_t),\qquad \d_t=\v_{t,+}-\v_{t,-}, & \nonumber \\
&G_t=\langle{\v_{t,-}-\v_{t,+},\nabla_t}\rangle, \qquad g_t = \langle{\x_t-\v_{t,+},\nabla_t}\rangle.&
\end{eqnarray}
Note that by convexity of $f$ and definition of $\v_{t,-}$, we clearly have 
\begin{align}
\forall t: \qquad h_t \leq g_t \leq G_t.
\end{align}

\begin{theorem}\label{thm:2levelDICG}
Suppose Algorithm \ref{alg:DICG} is initialized with $\x_1\in\mV$ such that $\x_1\in\argmin_{\v\in\mV}\langle{\v,\nabla{}f(\x_0)}\rangle$ for some arbitrary $\x_0\in\mP$. Then, 
\begin{align}
\forall t\geq 1:\qquad h_{t+1} \leq \left( 1-\frac{1}{16}\min\left\{1,\frac{\alpha}{(d^*+1)\beta D^2}\right\} \right)h_t. \label{thm1:res:primLin}
\end{align}
Moreover, even without assuming quadratic growth, the following sublinear convergence guarantees hold:
\begin{align}
&\forall t\geq 1: \qquad h_t \leq \frac{8\beta{}D^2}{t+15}, \label{thm1:res:primSub}\\
&\forall t\geq 3: \qquad \min_{\tau\in\{1,\dots,t\}} g_{\tau} \leq \frac{16\beta{}D^2}{t-1}\log(t-1). \label{thm1:res:dualSub}
\end{align}

\end{theorem}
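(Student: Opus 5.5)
The plan is to study the one-dimensional function $\varphi_t(\gamma)=f(\x_t+\gamma\d_t)$, which is convex. By $\beta$-smoothness and $\Vert\d_t\Vert\le D$ it satisfies $\varphi_t(\gamma)\le \varphi_t(0)-\gamma G_t+\frac{\beta D^2}{2}\gamma^2$. From this I will extract a per-iteration decrease guarantee in terms of $G_t$ and the current dyadic value $\eta$. Then I will lower-bound $\eta$ by a nonincreasing function of $h_t$. I take feasibility of all iterates for granted from Lemma \ref{lem:dicg:feas}. I also note that the step-size search is well defined: $\varphi_t'(0)=-G_t<0$ whenever the algorithm has not returned, so for all sufficiently large $i$ the trial step gives strict decrease.

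\emph{Step 1 (decrease per step).} Let $\eta$ be the value selected at iteration $t$, so that $\varphi_t(\eta)<\varphi_t(0)$ and $\eta_t=\eta/2$. Set $s_t:=G_t/(\beta D^2)$.
\begin{itemize}
\item If $\eta\le 2s_t$, the quadratic upper bound at $\gamma=\eta/2$ gives $h_t-h_{t+1}\ge \frac{\eta}{4}G_t$.
\item If $\eta>2s_t$, I use convexity of $\varphi_t$ on $[s_t,\eta]$. The point $\eta/2$ is a convex combination of $s_t$ and $\eta$ with weight at least $1/2$ on $s_t$. Moreover $\varphi_t(\eta)<\varphi_t(0)$ and $\varphi_t(s_t)\le\varphi_t(0)-\frac{G_t^2}{2\beta D^2}$. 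Together these give $h_t-h_{t+1}\ge \frac{G_t^2}{4\beta D^2}$.
\end{itemize}
This is exactly why the algorithm halves $\eta$. In both cases $h_t-h_{t+1}\ge \frac14 G_t\min\{\eta,s_t\}$.

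\emph{Step 2 (lower bound on $\eta$).} Consider the case where $i$ is strictly increased at iteration $t$, i.e. the selected $2^{-i}$ is smaller than the previous $\eta$. Then the step $2^{1-i}$ was allowed but failed to decrease $f$. Since the quadratic bound certifies strict decrease for every $\gamma<2s_t$, this forces $2^{1-i}\ge 2s_t$, hence $\eta\ge s_t$. Otherwise $\eta$ is unchanged. By induction, $\eta$ at iteration $t$ is at least $\min\{1,\min_{\tau\le t}s_\tau\}$.

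Next I lower-bound $s_\tau$ by a quantity $m_\tau$ that is nonincreasing in $\tau$ (the iterates are monotone, $h_{\tau+1}<h_\tau$). Then $\min_{\tau\le t}s_\tau\ge m_t$, so $\min\{\eta,s_t\}\ge\min\{1,m_t\}$.
\begin{itemize}
\item \emph{Linear rate.} Lemma \ref{lem:pairwiseLB}, applied with $\mF=\mP$ and $\x^*$ the projection of $\x_t$ onto $\mX^*$, combined with \eqref{eq:qg}, gives $G_t\ge h_t/(\sqrt{d^*+1}\,\dist(\x_t,\mX^*))\ge\sqrt{\alpha h_t/(2(d^*+1))}$. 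So I take $m_t=\sqrt{\alpha h_t/(2(d^*+1))}/(\beta D^2)$.
\item \emph{Sublinear rate.} Using only $G_t\ge h_t$, I take $m_t=h_t/(\beta D^2)$.
\end{itemize}

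\emph{Step 3 (conclusions).} Plugging into Step 1:
\begin{itemize}
\item In the linear case, $h_t-h_{t+1}\ge\frac14\min\{G_t,\frac{\alpha h_t}{2(d^*+1)\beta D^2}\}$, and $G_t\ge h_t$. This yields \eqref{thm1:res:primLin} up to tracking the constant $1/16$.
\item For \eqref{thm1:res:primSub}, I get $h_t-h_{t+1}\ge\frac14\min\{h_t,h_t^2/(\beta D^2)\}$. The initialization $\x_1\in\argmin_{\v}\langle\v,\nabla f(\x_0)\rangle$ gives $h_1\le\frac{\beta D^2}{2}$ (a standard FW-step bound), so the quadratic branch is always active. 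The standard recursion argument then gives $O(\beta D^2/t)$ with the stated offset.
\item For the dual bound \eqref{thm1:res:dualSub}, I sum $\frac14 G_\tau\min\{1,m_\tau\}\ge\frac{g_\tau h_\tau}{4\beta D^2}$ over $\tau$ from about $t/2$ to $t$. Combining this with the primal bound at time $t/2$ yields $\min_\tau g_\tau\lesssim\beta D^2\log t/t$.
\end{itemize}

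I expect the main obstacle to be Step 2. The rule makes $\eta$ monotone, so a small $\eta$ fixed early could in principle throttle later progress. The resolution is the observation that every decrease of $\eta$ is certified by a failed trial step, and is therefore tied to $s_\tau$ at that time. Since $s_\tau$ is bounded below by a monotone function of $h_\tau$, this stale lower bound stays valid as $h$ decreases. The remaining work is the constant bookkeeping.
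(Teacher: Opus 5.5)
Your treatment of \eqref{thm1:res:primLin} and \eqref{thm1:res:primSub} is correct and is essentially the paper's argument. Your $\min\{1,m_t\}$ plays the role of the paper's nonincreasing sequence $\rho_t\le\min\{1,G_t/(\beta D^2)\}$. Your two-case convexity bound and the induction in which every decrease of $\eta$ is certified by a failed trial step are the paper's steps. You even get slightly better constants: $\tfrac18$ instead of $\tfrac1{16}$, and $h_t\le 4\beta D^2/(t+7)$, both of which imply the stated bounds. Using $G_t\ge h_t$ also spares you the initialization for the linear rate.

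\textbf{Gap in the dual-gap bound \eqref{thm1:res:dualSub}.}
\begin{itemize}
\item Your per-step decrease is $h_\tau-h_{\tau+1}\ge g_\tau h_\tau/(4\beta D^2)$. It is proportional to $h_\tau$, not to $g_\tau^2$ as in the classical Frank--Wolfe dual-gap argument you are imitating.
\item Summing it additively over $\tau\in[t/2,t]$ and using the primal bound at $t/2$ only gives $\sum_\tau g_\tau h_\tau\lesssim\beta^2D^4/t$. This says nothing about $\min_\tau g_\tau$ without a lower bound on $h_\tau$, and $h_\tau$ can be much smaller than $g_\tau$.
\item Even if you insert the generic inequality $h_\tau\ge g_\tau^2/(2\beta D^2)$ (Eq.~\eqref{eq:genericDualGap}, which your sketch never invokes), this route yields only $\min_\tau g_\tau=O(\beta D^2t^{-2/3})$. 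Nothing in it produces a $\log t/t$ rate.
\end{itemize}

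\textbf{The missing idea} is to use the decrease multiplicatively, as the paper does:
\[
h_{\tau+1}\le h_\tau\Bigl(1-\tfrac{g_\tau}{4\beta D^2}\Bigr)\le h_\tau\exp\Bigl(-\tfrac{g_\tau}{4\beta D^2}\Bigr).
\]
Rolling this out from $\tau=1$ and using $h_1\le\beta D^2/2$ gives $h_T\le\frac{\beta D^2}{2}\exp\bigl(-\frac{1}{4\beta D^2}\sum_{\tau<T}g_\tau\bigr)$. Now suppose $g_\tau\ge\frac{8\beta D^2\log(T-1)}{T-1}$ for all $\tau<T$. Then $h_T\le\frac{\beta D^2}{2(T-1)^2}$, and the endpoint bound $g_T\le\sqrt{2\beta D^2 h_T}$ gives $g_T\le\frac{\beta D^2}{T-1}$. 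In either case $\min_{\tau\le T}g_\tau\le\frac{16\beta D^2\log(T-1)}{T-1}$ for $T\ge 3$. The logarithm is exactly the cost of turning geometric decay of $h$ into a polynomial bound on $g$, and no window argument is needed.
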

\begin{remark}\label{rem:dualgap}
Note that the sublinear dual convergence guarantee \eqref{thm1:res:dualSub} is worse by a log factor compared to the standard Frank-Wolfe method \cite{jaggi2013revisiting}. In case the quadratic growth condition holds, then linear convergence guarantee \eqref{thm1:res:primLin} could be turned to a linear rate for the dual gap using the following generic (i.e., independent of the algorithm used) result (see for instance Theorem 2 in \cite{lacoste2015global}): 
\begin{align}\label{eq:genericDualGap}
h_t \leq \beta{}D^2/2 \quad \Longrightarrow \quad g_t \leq D\sqrt{2\beta{}h_t}. 
\end{align}
\end{remark}

\begin{proof} First note that according to Lemma \ref{lem:dicg:feas}, all iterates are indeed feasible with respect to $\mP$. 

Consider some monotone nonincreasing sequence $(\rho_t)_{t\geq 1}\subset[0,\min\{1,G_t/(\beta{}D^2)\}]$. As a first step, we establish that on any iteration $t$ of Algorithm \ref{alg:DICG} it holds that
\begin{align}\label{eq:thm1:1}
h_{t+1} \leq h_t - \frac{\rho_tG_t}{8}.
\end{align} 
Fix some iteration $t$. From the smoothness of $f$ we have that, 
\begin{align} 
\forall \rho\in[0,\rho_t]: \quad f(\x_t+\rho\d_t) &\leq f(\x_t)+\rho\langle{\d_t,\nabla_t}\rangle +\frac{\beta\rho^2}{2}\Vert{\d_t}\Vert^2 \nonumber \\ 
&\leq f(\x_t)-\rho G_t+\frac{\beta D^2\rho^2}{2} \nonumber\\
& \leq f(\x_t)-\frac{\rho G_t}{2}. \label{eq:thm1:2} 
\end{align}

We next show that 
\begin{align} 
\eta_t\geq \frac{\rho_t}{4}. \label{eq:thm1:3} 
\end{align} 
In the following let $\bar{\eta}_t$ denote the value of the variable $\eta$ after the backtracking step on iteration $t$, i.e., the step-size used  is $\eta_t=\bar{\eta}_t/2$. 

If on iteration $t$ the value of $\eta$ is decreased by the backtracking step, then the previous dyadic value $2\bar{\eta}_t$ did not lead to a decrease in function value. By Eq.~\eqref{eq:thm1:2}, this implies that $2\bar{\eta}_t>\rho_t$, and hence $\eta_t=\bar{\eta}_t/2>\rho_t/4$. Otherwise, if $\eta$ was not decreased on iteration $t$, then either it was never decreased before, in which case $\bar{\eta}_t=1\geq \rho_t$, or, letting $t'<t$ denote the last iteration on which $\eta$ was decreased, we have $\bar{\eta}_t=\bar{\eta}_{t'}$ and, by the previous case, $\bar{\eta}_{t'}>\rho_{t'}/2$. Since $\rho_t\leq\rho_{t'}$, it follows again that $\eta_t=\bar{\eta}_t/2\geq \rho_t/4$. Thus, Eq. \eqref{eq:thm1:3} indeed holds. 

We now prove Eq. \eqref{eq:thm1:1}. We consider two cases. First suppose that $\bar{\eta}_t\leq \rho_t$. Since $\eta_t=\bar{\eta}_t/2\leq\rho_t$, using Eq.~\eqref{eq:thm1:2} and Eq.~\eqref{eq:thm1:3}, we indeed obtain 
\[ 
h_{t+1} \leq h_t-\frac{\eta_tG_t}{2} \leq h_t-\frac{\rho_tG_t}{8}. 
\] 
It remains to consider the case $\bar{\eta}_t>\rho_t$. Define $\phi_t(\rho)=f(\x_t+\rho\d_t)-f^*$. Since $f$ is convex, $\phi_t$ is convex. Since $\bar{\eta}_t>\rho_t$, the point $\bar{\eta}_t/2$ belongs to the interval $[\rho_t/2,\bar{\eta}_t]$. Hence, there exists $\lambda\in[1/2,1]$ such that 
\begin{align*}
\frac{\bar{\eta}_t}{2} = \lambda\frac{\rho_t}{2}+(1-\lambda)\bar{\eta}_t . 
\end{align*} \label{eq:thm1:4}
Thus, we have that
\begin{align*}
h_{t+1} = \phi_t(\bar{\eta}_t/2) &\leq \lambda\phi_t(\rho_t/2)+(1-\lambda)\phi_t(\bar{\eta}_t) \\
&\leq \lambda\left({h_t - \frac{\rho_tG_t}{4}}\right) + (1-\lambda)h_t \\
&\leq h_t - \frac{\rho_tG_t}{8},
\end{align*}
where the first inequality follows from convexity, the second inequality follows from Eq. \eqref{eq:thm1:2} and since by definition of $\bar{\eta}_t$, $\phi_t(\bar{\eta}_t) < h_t$, and the third inequality follows since $\lambda\in[1/2,1]$.

Thus, we proved Eq. \eqref{eq:thm1:1}. Note Eq. \eqref{eq:thm1:1}, by setting $\rho_t = 0$ for all $t$, in particular implies the monotonicity of $(h_t)_{t\geq 1}$:
\begin{align}\label{eq:thm1:45}
\forall t\geq 1: \qquad h_{t+1} \leq h_t.
\end{align}
We now prove Result \eqref{thm1:res:primLin}. Fix some iteration $t$ and let $\x_t^*=\argmin_{\x^*\in\mX^*}\Vert{\x^*-\x_t}\Vert$. Applying Lemma \ref{lem:pairwiseLB} with $\x=\x_t$, $\mF=\mP$ and $\x_t^*$, we have 
\begin{align} 
G_t &= \langle{\v_{t,-}-\x_t,\nabla_t}\rangle + \langle{\x_t-\v_{t,+},\nabla_t}\rangle \nonumber \\
& \geq \frac{h_t}{\sqrt{d^*+1}\Vert{\x_t-\x_t^*}\Vert} \nonumber \\
&\geq \sqrt{\frac{\alpha{}h_t}{2(d^*+1)}},
 \label{eq:thm:1:5}
 \end{align}
 where the last inequality follows from the quadratic growth property.
 
Set
\[ 
\forall t\geq 1: \quad \rho_t= \min\left\{ 1,\, \frac{1}{\beta D^2} \sqrt{\frac{\alpha{}h_t}{2(d^*+1)}}\right\}. 
\] 
By Eq. \eqref{eq:thm:1:5}, we have $\rho_t\leq\min\{1,G_t/(\beta D^2)\}$. Also, by Eq. \eqref{eq:thm1:45} we have that this choice indeed satisfies that $(\rho_t)_{t\geq 1}$ is monotone non-increasing. Plugging this choice of $(\rho_t)_{t\geq 1}$ into Eq. \eqref{eq:thm1:1} together with Eq. \eqref{eq:thm:1:5} yields:
\begin{align}\label{eq:eq:thm1:65}
\forall t\geq 1: \quad h_{t+1} &\leq h_t - \frac{1}{8} \min\left\{ 1,\, \frac{1}{\beta D^2} \sqrt{\frac{\alpha{}h_t}{2(d^*+1)}}\right\}\sqrt{\frac{\alpha{}h_t}{2(d^*+1)}} \nonumber \\
&\leq h_t - \frac{1}{8}\min\left\{\beta{}D^2, \frac{\alpha{}h_t}{2\beta{}D^2(d^*+1)}\right\}.
\end{align}

It remains to use the special initialization to bound $h_1$. Let $\x^*\in\mX^*$. Using the smoothness of $f$ again and the choice of $\x_1$ we have that,
\begin{align}\label{eq:thm1:7}
f(\x_1) &\leq f(\x_0)+\langle{\nabla f(\x_0),\x_1-\x_0}\rangle + \frac{\beta}{2}\Vert{\x_1-\x_0}\Vert^2  \nonumber \\
&\leq  f(\x_0) +  \langle{\nabla f(\x_0),\x^*-\x_0}\rangle + \frac{\beta}{2}D^2  \nonumber  \\
&\leq f(\x^*) +   \frac{\beta}{2}D^2,
\end{align}
where the last inequality is again due to the convexity of $f$.

Thus, due to the monotonicity of $(h_t)_{t\geq 1}$ we have that for all $t\geq 1$, $h_t \leq h_1 \leq \beta{}D^2/2$. Plugging into the RHS of \eqref{eq:eq:thm1:65} yields 
\begin{align*}
\forall t\geq 1: \quad h_{t+1} 
\leq \left({1 - \frac{1}{16}\min\left\{1, \frac{\alpha{}}{\beta{}D^2(d^*+1)}\right\}}\right)h_t,
\end{align*}
which is exactly Result \eqref{thm1:res:primLin}.

We now turn to prove the primal sublinear rate \eqref{thm1:res:primSub}.
First note that by definition of $\v_{t,-},\v_{t,+}$, clearly
\begin{align}
G_t=\langle{\v_{t,-}-\v_{t,+},\nabla_t}\rangle \geq \langle{\x_t -\x^*,\nabla_t}\rangle \geq h_t, \label{eq:sublinear:G-lower-bound}
\end{align}
where the last inequality is due to convexity of $f$.

Define now
\begin{align}\label{eq:thm1:6}
\forall t\geq 1: \quad \rho_t=\min\left\{1,\frac{h_t}{\beta D^2}\right\}.
\end{align}
By Eq.~\eqref{eq:sublinear:G-lower-bound} we indeed have that $\rho_t \leq \min\left\{1,G_t/\beta D^2\right\}$, and by Eq. \eqref{eq:thm1:45} we have that the sequence $(\rho_t)_{t\geq 1}$ is monotone non-increasing. Thus, using Eq. \eqref{eq:thm1:1} implies that,
\begin{align}
\forall t\geq 1: \quad h_{t+1}
\leq
h_t-\frac{1}{8}\min\left\{h_t,\frac{h_t^2}{\beta{}D^2}\right\}.
\label{eq:sublinear:main-recursion}
\end{align}

Using \eqref{eq:thm1:7}, Eq.~\eqref{eq:sublinear:main-recursion}
simplifies to
\begin{align}
h_{t+1}
\leq
h_t-\frac{h_t^2}{8\beta{}D^2}.
\label{eq:sublinear:quadratic-recursion}
\end{align}
If $h_t=0$, the theorem is immediate. Otherwise, Eq.~\eqref{eq:sublinear:quadratic-recursion}
gives
\[
\frac{1}{h_{t+1}}
\geq
\frac{1}{h_t-h_t^2/(8\beta{}D^2)}
=
\frac{1}{h_t}\cdot \frac{1}{1-h_t/(8\beta{}D^2)}
\geq
\frac{1}{h_t}+\frac{1}{8\beta{}D^2},
\]
where the last inequality uses $1/(1-a)\geq 1+a$ for $a\in[0,1)$.
Iterating this inequality, we obtain
\[
\frac{1}{h_t}
\geq
\frac{1}{h_1}+\frac{t-1}{8\beta{}D^2}
\geq
\frac{2}{\beta{}D^2}+\frac{t-1}{8\beta{}D^2}
=
\frac{t+15}{8\beta{}D^2}.
\]
Equivalently,
\[
h_t\leq 
\frac{8\beta D^2}{t+15},
\]
as claimed.

Finally, we turn to prove the dual gap convergence \eqref{thm1:res:dualSub}. Consider again the sequence $(\rho_t)_{t\geq 1}$ defined in Eq. \eqref{eq:thm1:6} and recall that Eq. \eqref{eq:thm1:7} implies that $h_1 \leq \beta{}D^2/2$. Using Eq. \eqref{eq:thm1:1} again we have that for any $t$,
\begin{align*}
h_{t+1} \leq h_{t} - \frac{\rho_tG_t}{8} = h_t\left({1 - \frac{G_t}{8\beta{}D^2}}\right) &\leq h_t\exp\left({- \frac{G_t}{8\beta{}D^2}}\right)\leq h_t\exp\left({- \frac{g_t}{8\beta{}D^2}}\right).
\end{align*}
Rolling the above recursion from time $T$  and recalling the generic bound  $g_t \leq \sqrt{2\beta{}D^2h_t}$ (see Eq. \eqref{eq:genericDualGap}) gives,
\begin{align*}
\frac{g_T^2}{2\beta{}D^2} \leq h_T \leq h_1\exp\left({- \frac{1}{8\beta{}D^2}\sum_{t=1}^{T-1}g_t}\right) \leq\frac{\beta{}D^2}{2}\exp\left({- \frac{1}{8\beta{}D^2}\sum_{t=1}^{T-1}g_t}\right).
\end{align*}
Thus, if for all $t\in[T-1]$ we have $g_t \geq \frac{16\beta{}D^2}{T-1}\log(T-1)$ we get that
\begin{align*}
\frac{g_T^2}{2\beta{}D^2} \leq \frac{\beta{}D^2}{2}\cdot \left({\frac{1}{T-1}}\right)^2,
\end{align*}
which implies that $g_T \leq \frac{\beta{}D^2}{T-1}$.

Considering all cases, Result \eqref{thm1:res:dualSub} follows.

\end{proof}

\section{Algorithm for General Polytopes}\label{sec:genPoly}
In this section we no longer assume $\mP$ is 2-level. Unfortunately, this breaks the feasibility of using a dyadic step-size sequence as in our Algorithm \ref{alg:DICG}, which allowed taking a sufficiently large step on each iteration while preserving feasibility of the iterates. Instead, we abandon the use of pairwise steps as in Algorithm \ref{alg:DICG} (i.e., steps that use both $\v_{t,+}, \v_{t,-}$), and we consider splitting the updates into two types: using either only a forward vertex ($\v_{t,+}$) or an away vertex ($\v_{t,-}$), as done in the (decomposition-dependent) away step Frank-Wolfe method \cite{lacoste2015global} or its decomposition-invariant version proposed in \cite{bashiri2017decomposition}. A key challenge in analyzing the convergence rate of such methods is to bound the number of \textit{bad away steps}, i.e., steps in which the away vertex $\v_{t,-}$ was used for the descent direction, however, it cannot be guaranteed that sufficient progress has been made (e.g., when this vertex is given a very small mass in any decomposition of the current iterate that assigns nonzero weight to it). In \cite{lacoste2015global}, the authors used a simple yet clever counting argument that showed the overall number of such bad steps cannot exceed (roughly) half of the number of iterations executed, however this argument is fundamentally tied to maintaining an explicit decomposition of the current iterate. \cite{bashiri2017decomposition} tried to bypass this argument (in their decomposition-invariant away step method), however this leads to a worst-case linear convergence that scales with $nd^*$ which is much worse than the decomposition-based rate in \cite{lacoste2015global}.

To deal with the inherent difficulty of limiting the number of bad away steps we consider the decomposition-invariant away-step Frank-Wolfe with line-search variant  of \cite{bashiri2017decomposition} (named AFW-2 in their paper) with the modification that, denoting by $\x_t$ the iterate on some iteration $t$, we let the algorithm compute both the forward vertex and the away vertex only with respect to the current face $\mF(\x_t) = \{\x\in\mP~|~\x_t(i) = 0 \Rightarrow \x(i) = 0 ~\forall i\in\mI\}$. In particular, this forces the monotone relation $\mF(\x_t)\subseteq\mF(\x_{t-1})\subseteq\dots\subseteq\mF(\x_1)$ . Since each bad away step, by definition, sets to zero one of the coordinates indexed by $\mI$, this means that the overall number of such bad steps cannot exceed $|\mI| \leq n$ (due to the monotone relation above, once a coordinate becomes zero, it never increases again). This is precisely Algorithm \ref{alg:awayDICG} given below. 

We further establish that when the above method is initialized close enough to the optimal set $\mX^*$, we always have that $\mF(\x_t)\cap\mX^*\neq\emptyset$, see Lemma \ref{lem:surviving_optimum} below. As a consequence, we shall have that the above algorithm will never remove all optimal solutions from the active face and thus will indeed converge to an optimal solution, and in fact (up to at most $|\mI|$ bad away steps, as explained above) with a linear rate that scales only with the dimension of the optimal face $d^*$. This is established in Lemma \ref{lem:away_dicg_facewise} below.

Finally, since the above argument applies only at a certain proximity of the optimal set $\mX^*$, we apply a simple alternating scheme: on each iteration $i$ of this alternating scheme, we first run a simple parameter-free and dimension-independent method, such as the standard conditional gradient method with line-search, see  Algorithm \ref{alg:CG} below (which also does not require maintaining a decomposition) for $T_i$ iterations, and then use its output to initialize the aforementioned method (Algorithm \ref{alg:awayDICG}) and run it also for $T_i$ iterations. We let the number of iterations $T_i$ grow geometrically with $i$, which yields the overall desired complexity guarantees. See Theorem \ref{thm:hybrid_facewise} below.

\begin{algorithm}
\begin{algorithmic}
\caption{Conditional Gradient with line-search}\label{alg:CG}
\STATE input: number of iterations $T$, initialization point $\x_1$
\FOR{$t=1,\dots,T$}
\STATE $\v_{t,+} \gets\arg\min_{\v\in\mV}\langle{\v,\nabla{}f(\x_t)}\rangle$ 
\STATE $\eta_t \gets \arg\min_{\eta\in[0,1]}f(\x_t + \eta(\v_{t,+} - \x_t))$
\STATE $\x_{t+1} \gets \x_t + \eta_t(\v_{t,+} - \x_t)$
\ENDFOR
\end{algorithmic}
\end{algorithm}

\begin{algorithm}[H]
\begin{algorithmic}
\caption{Face-Monotone Away-step Decomposition-Invariant Conditional Gradient}\label{alg:awayDICG}
\STATE input: number of iterations $T$, initialization point $\x_1$
\FOR{$t=1,\dots,T$}
\STATE $\v_{t,+} \gets\argmin_{\v\in\mV\cap\mF(\x_t)}\langle{\v,\nabla{}f(\x_t)}\rangle$ \COMMENT{in-face Frank-Wolfe vertex}
\IF{$\langle{\x_t - \v_{t,+}, \nabla{}f(\x_t)}\rangle = 0$}
\RETURN $\x_t$
\ENDIF
\STATE $\v_{t,-} \gets\argmax_{\v\in\mV\cap\mF(\x_t)}\langle{\v,\nabla{}f(\x_t)}\rangle$ \COMMENT{away vertex}
\IF{$\langle{\x_t - \v_{t,+}, \nabla{}f(\x_t)}\rangle > \langle{\v_{t,-} - \x_t, \nabla{}f(\x_t)}\rangle$}
\STATE $\eta_t \gets \argmin_{\eta\in[0,1]}f(\x_t + \eta(\v_{t,+} - \x_t))$
\STATE $\x_{t+1} \gets \x_t + \eta_t(\v_{t,+} - \x_t)$
\ELSE
\STATE $\gamma_t \gets \max\{\gamma ~|~ \x_t(i) + \gamma(\x_t(i) - \v_{t,-}(i)) \geq 0 ~\forall i\in\mI\}$
\STATE $\eta_t \gets \argmin_{\eta\in[0,\gamma_t]}f(\x_t + \eta(\x_t - \v_{t,-}))$
\STATE $\x_{t+1} \gets \x_t + \eta_t(\x_t - \v_{t,-})$
\ENDIF
\ENDFOR
\end{algorithmic}
\end{algorithm}

\begin{algorithm}[H]
\begin{algorithmic}
\caption{Alternating Algorithms  \ref{alg:CG} and \ref{alg:awayDICG}}\label{alg:hybrid}
\STATE input: initialization point $\x_1\in\mP$, parameters $K_0\geq 1, q > 1$
\FOR{$t=1,2,\dots$}
\STATE $K_t \gets \lceil{K_0\cdot{}q^{t-1}}\rceil$
\STATE $\y_{t+1} \gets $ output of Algorithm \ref{alg:CG} after running for $K_t$ iterations and when initialized with $\x_t$
\STATE $\x_{t+1} \gets $ output of Algorithm \ref{alg:awayDICG} after running for $K_t$ iterations and when initialized with $\y_{t+1}$
\ENDFOR
\end{algorithmic}
\end{algorithm}
\begin{remark}
In principle, the use of the conditional gradient algorithm (Algorithm \ref{alg:CG}) in our Algorithm \ref{alg:hybrid} could be replaced with any convergent descent method and in particular with other conditional gradient methods (such as the away step Frank-Wolfe method of \cite{lacoste2015global}). Here we chose Algorithm \ref{alg:CG} because it does not require maintaining a decomposition (low memory and runtime overhead), it is parameter-free, and has a convergence rate independent of the ambient dimension $n$.
\end{remark}

We now turn to formally define the critical distance $r^*$. In the following, for any subset \(J\subseteq \mI\), let \(\x_J\) denote the restriction of
\(\x\) to the coordinates in \(J\), and denote the corresponding face of $\mP$:
\[
\mF_J := \{\x\in\mP \mid \x(i)=0 \ \forall i\in J\}.
\]

We define the set of faces (by associating a face with a set of active nonnegativity constraints) which do not contain an optimal solution:
\[
\mathcal J_{\rm bad}
:=
\{J\subseteq\mI \mid \mF_J\neq\emptyset
\text{ and } \mF_J\cap \mX^*=\emptyset\},
\]
and we define the critical distance
\[
{r^*}
:=
\begin{cases}
+\infty,
&
\mathcal J_{\rm bad}=\emptyset,
\\[0.5em]
\displaystyle
\min_{J\in\mathcal J_{\rm bad}}
\min_{\x^*\in\mX^*}
\|(\x^*)_J\|,
&
\mathcal J_{\rm bad}\neq\emptyset.
\end{cases}
\]
Throughout, we use the convention \(1/(+\infty)^2=0\). Note that the quantity \({r^*}\) is strictly positive whenever it is finite.

\begin{lemma}
\label{lem:surviving_optimum}
Let \(\x\in\mP\). If $\operatorname{dist}(\x,\mX^*)<{r^*}$
then
$\mF(\x)\cap\mX^*\neq\emptyset$. 
\end{lemma}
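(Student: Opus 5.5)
Let \(J:=\mI\setminus\mI(\x)\) denote the set of coordinates in \(\mI\) that vanish at \(\x\). By the definition of the minimal face, \(\mF(\x)=\mF_J\). I will argue by contraposition: assuming \(\mF(\x)\cap\mX^*=\emptyset\), I will show that \(\operatorname{dist}(\x,\mX^*)\geq r^*\).

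First, \(\mF_J\) is nonempty because it contains \(\x\). So if \(\mF_J\cap\mX^*=\emptyset\), then by definition \(J\in\mathcal J_{\rm bad}\). In particular \(\mathcal J_{\rm bad}\neq\emptyset\), so \(r^*\) is finite and
\begin{align*}
r^* = \min_{J'\in\mathcal J_{\rm bad}}\min_{\x^*\in\mX^*}\|(\x^*)_{J'}\| \leq \min_{\x^*\in\mX^*}\|(\x^*)_J\|.
\end{align*}

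Next, let \(\x^*\in\mX^*\) be arbitrary. Since \(\x(i)=0\) for all \(i\in J\), we have
\begin{align*}
\|\x-\x^*\| \geq \|(\x-\x^*)_J\| = \|(\x^*)_J\| \geq r^*.
\end{align*}
Taking the minimum over \(\x^*\in\mX^*\) gives \(\operatorname{dist}(\x,\mX^*)\geq r^*\). This contradicts the hypothesis \(\operatorname{dist}(\x,\mX^*)<r^*\).

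The case \(r^*=+\infty\) needs no separate treatment. In that case \(\mathcal J_{\rm bad}=\emptyset\), so the nonempty face \(\mF_J\) must intersect \(\mX^*\) directly.

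The only step needing care is identifying \(\mF(\x)\) with \(\mF_J\) for exactly this \(J\). This follows from the paper's definition \(\mF(\x)=\{\y\in\mP \mid \y(i)=0~\forall i\in\mI\setminus\mI(\x)\}\). With that identification, the rest is the coordinate-restriction inequality above, so I do not expect any real obstacle.
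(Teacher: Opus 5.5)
Your proposal is correct and follows the paper's proof essentially verbatim: identify $\mF(\x)$ with $\mF_J$ for the zero set $J$ of $\x$, note that $\mF_J\cap\mX^*=\emptyset$ would place $J$ in $\mathcal J_{\rm bad}$, and conclude $\|\x-\x^*\|\geq\|(\x^*)_J\|\geq r^*$ for every $\x^*\in\mX^*$. Your remark on the case $r^*=+\infty$ is a harmless addition, since the contradiction hypothesis already forces $\mathcal J_{\rm bad}\neq\emptyset$.
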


\begin{proof}
Let \(J=J(\x)=\{i\in\mI\mid \x(i)=0\}\). Then
\(\mF(\x)=\mF_J\), and \(\x\in\mF_J\), so \(\mF_J\neq\emptyset\).
Suppose by way of contradiction that \(\mF_J\cap\mX^*=\emptyset\). Then
\(J\in\mathcal J_{\rm bad}\). Hence, by the definition of
\({r^*}\), for every \(\x^*\in\mX^*\),
$\|(\x^*)_J\|_2\ge {r^*}$. 
Since \(\x_J=\mathbf{0}\), it follows that for every \(\x^*\in\mX^*\),
\[
\|\x-\x^*\|
\ge
\|(\x-\x^*)_J\|
=
\|(\x^*)_J\|
\ge
{r^*}.
\]
Thus, $\operatorname{dist}(\x,\mX^*)\ge {r^*}$,
contradicting the assumption.
\end{proof}

We recall the following standard definition of a bad away step which will be central to our analysis of Algorithm \ref{alg:awayDICG}.
\begin{definition}[bad away step]
We say an iteration $t$ of Algorithm \ref{alg:awayDICG} corresponds to a \textit{bad away step} if the away direction was chosen and \(\eta_t=\gamma_t\), i.e., the
maximal step-size has been chosen.
\end{definition}

For the following lemma \ref{lem:away_dicg_facewise} and its proof we use the notation defined in \eqref{eq:notation}.

\begin{lemma}
\label{lem:away_dicg_facewise}
Suppose the facial quadratic growth condition \eqref{eq:fqg} holds and suppose Algorithm \ref{alg:awayDICG} is initialized with some
\(\x_1\in\mP\) such that $f(\x_1)-f^* < \epsilon_0 := \alpha {r^*}^2 / 2$.
Then the overall number of bad away-steps is at most \(|\mI|\), and on each
iteration \(t\) which is not a bad away-step it holds that
\[
h_{t+1}
\le
h_t
\left(
1
-
\frac{1}{16}
\min\left\{
1,
\frac{\alpha_{\rm F}}
{\beta D^2(d^*+1)}
\right\}
\right).
\]
\end{lemma}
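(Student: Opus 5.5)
The proof will first establish three structural facts and then carry out a per-iteration descent analysis.

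\emph{Step 1: the invariant.} Every step of Algorithm \ref{alg:awayDICG} uses line-search (or a line-search capped at $\gamma_t$), so $(h_t)$ is nonincreasing. Hence $h_t\le h_1<\epsilon_0=\alpha{r^*}^2/2$ for all $t$. Quadratic growth \eqref{eq:qg} then gives $\dist(\x_t,\mX^*)^2\le 2h_t/\alpha<{r^*}^2$. Lemma \ref{lem:surviving_optimum} therefore yields $\mF(\x_t)\cap\mX^*\neq\emptyset$ for every $t$.

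\emph{Step 2: face monotonicity and the count of bad steps.} A forward step moves toward $\v_{t,+}\in\mF(\x_t)$, so the iterate stays in $\mF(\x_t)$. An away step $\x_t+\eta(\x_t-\v_{t,-})$ with $\v_{t,-}\in\mF(\x_t)$ keeps every zero coordinate at zero. Thus $\mF(\x_{t+1})\subseteq\mF(\x_t)$, and a coordinate in $\mI$ that becomes zero never becomes positive again. A bad away step takes $\eta_t=\gamma_t$, which by the definition of $\gamma_t$ zeroes at least one coordinate $i\in\mI$ with $\x_t(i)>0$. So each bad step strictly shrinks the support, and there are at most $|\mI|$ bad steps.

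\emph{Step 3: progress on good steps.} Fix a non-bad iteration $t$. Let $\mF=\mF(\x_t)$ and let $\x_t^*$ be the projection of $\x_t$ onto $\mX^*\cap\mF$, which is nonempty by Step 1. Apply Lemma \ref{lem:pairwiseLB} with this face: the forward vertex there is exactly our in-face $\v_{t,+}$ and the away vertex is $\v_{t,-}$. This gives $G_t\ge h_t/(\sqrt{d^*+1}\Vert\x_t-\x_t^*\Vert)$. Facial quadratic growth \eqref{eq:fqg} on the face $\mF$ gives $\Vert\x_t-\x_t^*\Vert^2\le 2h_t/\alpha_{\rm F}$, hence $G_t\ge\sqrt{\alpha_{\rm F}h_t/(2(d^*+1))}$.

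The direction chosen has gap at least $G_t/2$, since the larger of $g_t$ and $\langle\v_{t,-}-\x_t,\nabla_t\rangle$ is at least half their sum. It also has norm at most $D$.
\begin{itemize}
\item If the line-search minimizer is interior, or the step size is $\le1$ in a forward step (not clipped by $\gamma_t$), smoothness gives $h_{t+1}\le h_t-\min_{\eta\in[0,1]}\{\eta G_t/2-\beta D^2\eta^2/2\}$.
\item Likewise, in a non-bad away step the optimal step over $[0,\gamma_t]$ is strictly less than $\gamma_t$, so it is at least as good as the unconstrained quadratic minimizer over $[0,\gamma_t]$. For $\eta\le\gamma_t$ the quadratic upper bound therefore still applies, and the minimum of that upper bound is attained in the interior or else clipping does not hurt. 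I expect this to be a mild obstacle: I must argue that when $\eta_t<\gamma_t$, convexity of $\phi(\eta)=f(\x_t+\eta\d)$ makes $\eta_t$ the global minimizer over $[0,\infty)\cap$ the domain. Hence $h_{t+1}\le\min_{\eta\ge0}$ of the quadratic bound, capped at $\eta\le1$ for forward steps.
\end{itemize}
This yields $h_{t+1}\le h_t-\min\{G_t/4,\;G_t^2/(8\beta D^2)\}$.

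\emph{Step 4: converting to a linear rate.} For the quadratic branch, plug in the lower bound on $G_t$ to get a decrease of at least $\alpha_{\rm F}h_t/(16\beta D^2(d^*+1))$. For the branch $G_t/4$, use $G_t\ge g_t\ge h_t$ to get a decrease of at least $h_t/4\ge h_t/16$. Together these give the claimed factor $1-\frac{1}{16}\min\{1,\alpha_{\rm F}/(\beta D^2(d^*+1))\}$.

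The main subtle points are Step 1, which ensures the face never loses all optima so that \eqref{eq:fqg} and Lemma \ref{lem:pairwiseLB} apply inside $\mF(\x_t)$, and the clipping argument for non-bad away steps in Step 3.
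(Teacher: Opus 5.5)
Your proposal is correct and follows the paper's proof essentially step for step: the invariant $\mF(\x_t)\cap\mX^*\neq\emptyset$ via monotonicity, global quadratic growth and Lemma \ref{lem:surviving_optimum}; the support-shrinking count of bad away steps; Lemma \ref{lem:pairwiseLB} on the face $\mF(\x_t)$ combined with \eqref{eq:fqg}; and, for a non-bad away step, the convexity argument that the unclipped line-search minimizer is the global minimizer along the ray, so it can be compared with $\tilde\eta=\min\{1,b_t/(\beta D^2)\}$ even when that point lies beyond $\gamma_t$ (this uses that $f$ is $\beta$-smooth on all of $\reals^n$). The only cosmetic difference is that you lower-bound the chosen gap by $G_t/2$ and use $G_t\ge g_t\ge h_t$ (with the in-face $\v_{t,+}$, the inequality $g_t\ge h_t$ relies on your Step 1 invariant, which you should state explicitly), whereas the paper directly lower-bounds $a_t$ or $b_t$ by $\max\{\frac12\sqrt{\alpha_{\rm F}h_t/(2(d^*+1))},h_t\}$.
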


\begin{proof}
Since Algorithm \ref{alg:awayDICG} uses exact line-search,
it is a descent method. Hence $h_t\le h_1<\epsilon_0$ for all $t$.
By the global quadratic growth bound, 
$\operatorname{dist}(\x_t,\mX^*)^2
\le
\frac{2}{\alpha}h_t
<
{r^*}^2$, which implies using Lemma~\ref{lem:surviving_optimum} that
$\mF(\x_t)\cap\mX^*\neq\emptyset$ for all $t$.

We now prove the contraction on every iteration which is not a bad away-step.
Fix such an iteration \(t\). If \(h_t=0\), the claim is trivial, so assume
\(h_t>0\) and denote
$\x_t^* =\argmin_{\x^*\in\mX^*\cap\mF(\x_t)}
\|\x_t-\x^*\|$.
By Lemma \ref{lem:pairwiseLB}, applied with \(\x=\x_t\), $\mF = \mF(\x_t)$, and $\x_t^*$, we have that
\begin{align}\label{eq:lem:awayDICGmain:1}
\langle \v_{t,-}-\x_t,\nabla_t\rangle
+
\langle \x_t-\v_{t,+},\nabla_t\rangle
&\ge
\frac{h_t}{\sqrt{d^*+1}\Vert{\x_t-\x_t^*}\Vert} \nonumber \\
&\geq  \sqrt{\frac{\alpha_{\rm{F}}h_t}{2(d^*+1)}}
,
\end{align}
where the last inequality follows from the facial quadratic growth condition \eqref{eq:fqg}.

Also, by the definition of $\v_{t,+}$ in the algorithm, the observation that $\mX^*\cap\mF(\x_t)\neq\emptyset$, and the convexity of $f$, we have that
\begin{align}\label{eq:lem:awayDICGmain:2}
\langle \x_t-\v_{t,+},\nabla_t\rangle
\ge h_t.
\end{align}
First suppose that the algorithm takes an in-face Frank-Wolfe step. Denoting $a_t:=\langle \x_t-\v_{t,+},\nabla_t\rangle$, Eq. \eqref{eq:lem:awayDICGmain:1} and Eq. \eqref{eq:lem:awayDICGmain:2} imply that
\begin{align}\label{eq:lem:awayDICGmain:3}
a_t
\ge
\max\left\{
\frac{1}{2}
\sqrt{
\frac{\alpha_{\rm F}}{2(d^*+1)}
}
\sqrt{h_t},~ h_t\right\}.
\end{align}
For every \(\eta\in[0,1]\), the smoothness of $f$ gives
\[
f\bigl(\x_t+\eta(\v_{t,+}-\x_t)\bigr)
\le
f(\x_t)
-\eta a_t
+
\frac{\beta D^2\eta^2}{2}.
\]
Since \(\eta_t\) is chosen by exact line-search over \([0,1]\), comparison with
$\tilde{\eta}:=\min\left\{1,\frac{a_t}{\beta D^2}\right\}$
gives
\[
h_{t+1}
\le
h_t
-
\frac{1}{2}
\min\left\{
\frac{a_t^2}{\beta D^2},
a_t
\right\}.
\]
Using Eq. \eqref{eq:lem:awayDICGmain:3} we get,
\begin{align}\label{eq:lem:awayDICGmain:5}
h_{t+1} &\le h_t - \frac{1}{2}\min\left\{\frac{\alpha_{\rm F}}{8\beta D^2(d^*+1)}h_t,h_t\right\} \nonumber \\
&\le h_t\left(1-\frac{1}{16}\min\left\{1,\frac{\alpha_{\rm F}}{\beta D^2(d^*+1)}\right\}\right).
\end{align}

Now suppose Algorithm 3 takes an away step and denote $b_t:=\langle \v_{t,-}-\x_t,\nabla_t\rangle$. Since the away step is chosen,
$b_t\ge \langle \x_t-\v_{t,+},\nabla_t\rangle$, which implies via   Eq. \eqref{eq:lem:awayDICGmain:1} and Eq. \eqref{eq:lem:awayDICGmain:2} that
\begin{align}\label{eq:lem:awayDICGmain:4}
b_t \ge \max\left\{\frac{1}{2}\sqrt{\frac{\alpha_{\rm F}}{2(d^*+1)}}\sqrt{h_t},~ h_t\right\}.
\end{align}
Define
\[
\phi(\eta):=
f\bigl(\x_t+\eta(\x_t-\v_{t,-})\bigr).
\]
If the line-search minimizer is not the endpoint of
\([0,\gamma_t]\), then due to the convexity of $\phi(\eta)$ it follows that $\eta_t\in(0,\gamma_t)$ is the global minimizer of $\phi(\eta)$ and hence we can compare it with $\tilde{\eta}:=\min\{1,\frac{b_t}{\beta D^2}\}$.
This yields using the smoothness of $f$ that,
\begin{align}\label{eq:lem:awayDICGmain:55}
f(\x_{t+1}) &\leq f(\x_t + \tilde{\eta}(\x_t - \v_{t,-})) \leq f(\x_t) - \tilde{\eta}b_t + \frac{\tilde{\eta}^2\beta{}D^2}{2}
\\
&\leq f(\x_t) - \frac{1}{2}\min\left\{\frac{b_t^2}{\beta D^2},~b_t\right\} \nonumber.
\end{align}
Subtracting $f^*$ from both sides and using Eq. \eqref{eq:lem:awayDICGmain:4} we get,
\[
h_{t+1} \le h_t \left(1-\frac{1}{16}\min\left\{1,\frac{\alpha_{\rm F}}{\beta D^2(d^*+1)}\right\}\right),
\]
which is the same as in Eq. \eqref{eq:lem:awayDICGmain:5}.

Note that the case $\eta_t = 0$ cannot occur unless $f(\x_t) = f^*$, since by \eqref{eq:lem:awayDICGmain:55}, there always exists a step-size in $(0,\gamma_t]$ for which the function value decreases. Thus, it remains to upper-bound the number of bad away-steps. Let
$\mI_t:=\{i\in\mI\mid \x_t(i)>0\}$.
Since both \(\v_{t,+}\) and \(\v_{t,-}\) belong to \(\mF(\x_t)\), neither an
in-face Frank-Wolfe step nor an away step can introduce a new positive coordinate
in \(\mI\). Hence $\mI_{t+1}\subseteq \mI_t$ for all $t$.

If iteration \(t\) is a bad away-step, then \(\eta_t=\gamma_t\). Since
\(\gamma_t\) is the maximal feasible step-size in the away direction, at least one coordinate in \(\mI_t\) becomes
zero after the update. Therefore, $|\mI_{t+1}|<|\mI_t|$, meaning the total number of bad away-steps is at most $|\mI_1|\le |\mI|$.
\end{proof}

In the following, the notation \(O_{K_0,q}(\cdot)\) means that the constants hidden in the
big-\(O\) notation may depend on the fixed parameters \(K_0\) and \(q\), but not
on \(\epsilon,\alpha,\alpha_{\rm F},\beta,D,{r^*},\xi,d^*\), or
\(|\mI|\). We also write
$\log_+(u):=\max\{0,\log u\}$.

\begin{theorem}
\label{thm:hybrid_facewise}
Suppose the facial quadratic growth condition \eqref{eq:fqg} holds, and
consider Algorithm \ref{alg:hybrid} with fixed parameters \(K_0\geq 1\)
and \(q>1\).

\begin{enumerate}
\item Suppose that \(r^*<+\infty\). For any
$\epsilon\in\left(0,\frac{\alpha {r^*}^2}{2}\right)$,
Algorithm \ref{alg:hybrid} finds a point
\(\x_\epsilon\in\mP\) such that $f(\x_\epsilon)-f^*\leq\epsilon$ 
using at most
\[
O_{K_0,q}\left(
\frac{\beta D^2}{\alpha {r^*}^2}
+
|\mI|
+
\max\left\{
1,
\frac{\beta D^2(d^*+1)}{\alpha_{\rm F}}
\right\}
\log_+
\frac{\alpha {r^*}^2}{2\epsilon}
\right)
\]
calls to the first-order oracle of \(f\) and the face-constrained
linear optimization oracle of \(\mP\).

\item Suppose that \(r^*=+\infty\). For any $\epsilon\in(0,\beta D^2)$, 
Algorithm \ref{alg:hybrid} finds a point
\(\x_\epsilon\in\mP\) such that $f(\x_\epsilon)-f^*\leq\epsilon$
using at most
\[
O_{K_0,q}\left(
|\mI|
+
\max\left\{
1,
\frac{\beta D^2(d^*+1)}{\alpha_{\rm F}}
\right\}
\log_+
\frac{\beta D^2}{\epsilon}
\right)
\]
calls to the first-order oracle of \(f\) and the face-constrained
linear optimization oracle of \(\mP\).
\end{enumerate}
\end{theorem}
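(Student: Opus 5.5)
The proof tracks the outer rounds of Algorithm \ref{alg:hybrid}, using two facts about its subroutines. First, both Algorithm \ref{alg:CG} and Algorithm \ref{alg:awayDICG} use exact line-search, so they are descent methods: they never increase $f$. Hence $f(\x_{t+1})\le f(\y_{t+1})\le f(\x_t)$ in every round. Second, Algorithm \ref{alg:CG} with line-search satisfies the standard dimension-free bound
\begin{align*}
f(\y_{t+1})-f^*\le \frac{2\beta D^2}{K_t+2}
\end{align*}
from any initialization. This holds because line-search does at least as well as the step $2/(k+2)$, and the first step already gives $h\le \beta D^2/2$.

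\emph{Case 1 ($r^*<\infty$).} I define the first ``good'' round $t_0$ as the smallest $t$ with $2\beta D^2/(K_t+2)<\epsilon_0=\alpha{r^*}^2/2$. This requires only $K_{t_0}=O(\beta D^2/(\alpha {r^*}^2))$. Since the $K_t$ grow geometrically, the total work of rounds $1,\dots,t_0$ is $\sum_{t\le t_0}2K_t=O_{K_0,q}(K_{t_0}+t_0)$. The term $t_0$ is logarithmic, and I absorb it, e.g.\ into the $O(\cdot)$ via $K_0\ge1$. So the warm-up costs $O_{K_0,q}\bigl(1+\beta D^2/(\alpha{r^*}^2)\bigr)$.

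In round $t_0$, Algorithm \ref{alg:awayDICG} is started from $\y_{t_0+1}$ with gap below $\epsilon_0$, so Lemma \ref{lem:away_dicg_facewise} applies. Among its $K_{t_0}$ iterations at most $|\mI|$ are bad away steps. Every other iteration contracts by the factor $(1-c)$, where
\begin{align*}
c=\tfrac1{16}\min\left\{1,\frac{\alpha_{\rm F}}{\beta D^2(d^*+1)}\right\},
\end{align*}
and bad steps never increase $f$. So after $N$ non-bad iterations the gap is at most $\epsilon_0(1-c)^N$. This is at most $\epsilon$ once $N\ge c^{-1}\log_+(\epsilon_0/\epsilon)$. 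The algorithm may also terminate early; in that case the in-face gap is zero, and since the current face contains an optimal point, the iterate is optimal.

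By monotonicity, every later round starts below $\epsilon_0$, and the same lemma applies again in each new call with a fresh budget of $|\mI|$ bad steps. I therefore take $t_1\ge t_0$ to be the first round with
\begin{align*}
K_{t_1}\ge |\mI|+c^{-1}\log_+(\epsilon_0/\epsilon).
\end{align*}
Geometric growth gives $K_{t_1}\le \max\{K_{t_0},\,q\,(|\mI|+c^{-1}\log_+(\epsilon_0/\epsilon))+1\}$. The total cost $\sum_{t\le t_1}2K_t=O_{K_0,q}(K_{t_1})$ then yields the stated bound, with $c^{-1}=16\max\{1,\beta D^2(d^*+1)/\alpha_{\rm F}\}$.

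\emph{Case 2 ($r^*=\infty$).} Here every nonempty face meets $\mX^*$, so the argument of Lemma \ref{lem:away_dicg_facewise} holds from any starting point, with $\epsilon_0$ replaced by $h$ at initialization. After the first CG step, $h\le\beta D^2/2$, since $K_t\ge1$. The same round-counting with $t_0=1$ then gives the bound with $\log_+(\beta D^2/\epsilon)$.

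The main obstacle is the bookkeeping of the geometric schedule. I must check that the round sum is dominated by its last term up to $q/(q-1)$, and that the additive $t_0$ and ceiling terms are absorbed, which is why $O_{K_0,q}$ is allowed to hide $K_0$- and $q$-dependent constants. A second point to verify is that Lemma \ref{lem:away_dicg_facewise} needs only $h_1<\epsilon_0$ at the start of each call, which the monotonicity of the outer scheme guarantees.
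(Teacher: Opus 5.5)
Your proposal is correct and follows essentially the same argument as the paper: Jaggi's $2\beta D^2/(K+2)$ bound gets the CG phase below $\epsilon_0=\alpha r^{*2}/2$, Lemma \ref{lem:away_dicg_facewise} then applies with at most $|\mI|$ bad away steps per call, and a geometric-sum estimate controls the schedule. The differences are cosmetic. You use two thresholds $t_0\le t_1$ together with monotonicity across rounds, whereas the paper simply takes the first round with $K_t\ge\max\{K_{\rm CG},K_{\rm A}\}$, so the CG phase of that same round already lands below $\epsilon_0$. You also explicitly handle early termination of Algorithm \ref{alg:awayDICG}, which the paper leaves implicit.
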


\begin{remark}
Recall that the $\epsilon$-approximate optimality of some candidate point $\x$ could be verified by checking if the dual gap $g_{\x} = \max_{\v\in\mV}\langle{\x-\v,\nabla{}f(\x)}\rangle$ satisfies $g_{\x} \leq \epsilon$. As already mentioned in Remark \ref{rem:dualgap}, if $\x$ satisfies  $f(\x) - f^* \leq \min\{\beta{}D^2/2,~\epsilon^2/(2\beta{}D^2)\}$, then it is guaranteed that $g_{\x} \leq \epsilon$.
\end{remark}

\begin{proof}
Suppose $r^* < + \infty$. 
Denote:
\[
\epsilon_0
:=
\frac{\alpha {r^*}^2}{2}, \qquad K_{\rm CG}
:=
\left\lceil
\frac{4\beta D^2}{\alpha {r^*}^2}
\right\rceil.
\]
By Theorem 1 in \cite{jaggi2013revisiting}, after \(K_{\rm CG}\) iterations of Algorithm 2, the output
\(\y\) satisfies $f(\y)-f^* \le \frac{2\beta D^2}{K_{\rm CG}+2} < \epsilon_0$.

Denote
\[
\theta
:=
\frac{1}{16}
\min\left\{
1,
\frac{\alpha_{\rm F}}
{\beta D^2(d^*+1)}
\right\}.
\]
By Lemma \ref{lem:away_dicg_facewise}, when Algorithm \ref{alg:awayDICG} is initialized with a
point \(\y\) satisfying \(f(\y)-f^*<\epsilon_0\), the total number of bad
away-steps is at most \(|\mI|\), and on every iteration which is not a bad
away-step,
\[
f(\x_{s+1})-f^*
\le
(1-\theta)\bigl(f(\x_s)-f^*\bigr).
\]
Therefore, after running Algorithm \ref{alg:awayDICG} for
\[
K_{\rm A}
:=
|\mI|
+
\left\lceil
\frac{1}{\theta}
\log_+
\frac{\epsilon_0}{\epsilon}
\right\rceil
\]
iterations, at least \(K_{\rm A}-|\mI|\) iterations are not bad away-steps, and
the output \(\x\) satisfies
\[
f(\x)-f^*
\le
\epsilon_0(1-\theta)^{K_{\rm A}-|\mI|}
\le
\epsilon_0\exp\bigl(-\theta(K_{\rm A}-|\mI|)\bigr)
\le
\epsilon.
\]
Now let $K(\epsilon):=\max\{K_{\rm CG},K_{\rm A}\}$ and  let \(t\) be the first outer iteration of Algorithm \ref{alg:hybrid} for which $K_t\ge K(\epsilon)$. It follows from the above that on this iteration the output of Algorithm \ref{alg:awayDICG}, the point $\x_{t+1}$, will indeed satisfy $f(\x_{t+1} ) - f^* \leq \epsilon$, as needed.

Since $K_s=\lceil K_0q^{s-1}\rceil$, 
and since \(t\) is the first index for which \(K_t\ge K(\epsilon)\), we have
$K_t\le K_0+qK(\epsilon)$.
Moreover,
\begin{align*}
\sum_{s=1}^t K_s &\le t+\sum_{s=1}^t K_0q^{s-1}
\le t+\frac{q}{q-1}K_0q^{t-1}
\le t+\frac{q}{q-1}K_t \\
&\le t+\frac{q}{q-1}\bigl(K_0+qK(\epsilon)\bigr).
\end{align*}
Also,
\[
t
\le
1+
\left\lceil
\log_q\left(\frac{K(\epsilon)}{K_0}+1\right)
\right\rceil.
\]
Each outer iteration uses \(K_s\) iterations of Algorithm \ref{alg:CG} and \(K_s\) iterations
of Algorithm \ref{alg:awayDICG}, and each such iteration uses one first-order oracle call and at most two
linear optimization oracle calls. Thus, the overall number of oracle calls is at most
a universal constant times
\[
\frac{q}{q-1}\bigl(K_0+qK(\epsilon)\bigr)
+
\log_q\left(\frac{K(\epsilon)}{K_0}+1\right).
\]
For fixed \(K_0\ge 1\) and \(q>1\), this is \(O_{K_0,q}(K(\epsilon))\).

Finally, plugging-in
\[
K_{\rm CG}
=
O\left(
\frac{\beta D^2}{\alpha {r^*}^2}
\right), ~~
K_{\rm A} =O\left(|\mI|+\max\left\{1,\frac{\beta D^2(d^*+1)}{\alpha_{\rm F}}\right\}\log_+\frac{\epsilon_0}{\epsilon}\right),
\]
and substituting \(\epsilon_0=\alpha {r^*}^2/2\) proves the claimed
oracle complexity bound.

We now consider the case \(r^*=+\infty\). In this case
\(\mathcal{J}_{\rm bad}=\emptyset\), and hence every nonempty face of
\(\mP\) intersects \(\mX^*\). Consequently, the proof of
Lemma \ref{lem:away_dicg_facewise} applies to
Algorithm \ref{alg:awayDICG} without any restriction on its
initialization.

Using again Theorem 1 in \cite{jaggi2013revisiting}, after \(K_t\geq 1\) iterations of Algorithm \ref{alg:CG}, its output
\(\y_{t+1}\) satisfies
\[
f(\y_{t+1})-f^*
\leq
\frac{2\beta D^2}{K_t+2}
\leq
\beta D^2.
\]
Since at most \(|\mI|\) iterations of Algorithm
\ref{alg:awayDICG} are bad away-steps, after
\[
K_{\rm A}^{\infty}
:=
|\mI|
+
\left\lceil
\frac{1}{\theta}
\log_+
\frac{\beta D^2}{\epsilon}
\right\rceil
\]
iterations its output \(\x\) satisfies
\[
\begin{aligned}
f(\x)-f^*
&\leq
\beta D^2
(1-\theta)^{K_{\rm A}^{\infty}-|\mI|} \\
&\leq
\beta D^2
\exp\left(
-\theta
\bigl(K_{\rm A}^{\infty}-|\mI|\bigr)
\right)
\leq
\epsilon.
\end{aligned}
\]
Let \(t\) be the first outer iteration for which
\(K_t\geq K_{\rm A}^{\infty}\). The same geometric-schedule argument
as in the finite-\(r^*\) case shows that the overall number of oracle
calls up to and including this outer iteration is
$O_{K_0,q}\left(K_{\rm A}^{\infty}\right)$.
Substituting the definitions of \(K_{\rm A}^{\infty}\) and \(\theta\)
proves the second claim.
\end{proof}

\section{Numerical Demonstrations}\label{sec:numerics}

\subsection{Projection onto the unit cube}
We provide a simple numerical illustration of Algorithm \ref{alg:DICG}.  The goal of the experiment is not to provide an extensive empirical study, but rather to compare the behavior of this dyadic pairwise
decomposition-invariant method with other linearly convergent conditional gradient methods that use exact line-search (and hence are parameter-free).

We consider the Euclidean projection problem
\begin{align}
    \min_{\x\in[0,1]^n} f(\x)
    :=
    \frac{1}{2}\|\x-\x^0\|^2.
    \label{eq:cube-projection-experiment}
\end{align}
Throughout this experiment the target point $\x^0$ is feasible, and hence
$f^*=0$ and the unique optimal solution is $\x^0$. 
The problem is therefore a particularly transparent $1$- strongly convex and $1$-smooth problem over a $2$-level polytope.

In order to match the representation in \eqref{eq:polyStruct} we consider the lifted formulation
\[
    \mP_{\rm cube}
    =
    \left\{
        (\x,\s)\in\reals^{n}\times\reals^n
        ~\middle|~
        \x+\s=\mathbf{1}_n,\ \x\geq 0,\ \s\geq 0
    \right\}.
\]

The instances are generated as follows. We fix integers $n=1000$ and $k\leq n$.
First, a binary vector in $\{0,1\}^n$ is sampled uniformly at random. Then $k$
coordinates are chosen uniformly without replacement and are replaced by values
drawn independently and uniformly from $[\mu,1-\mu]$ for $\mu=0.001$. Thus, $\x^0$ has exactly $k$
coordinates strictly between $0$ and $1$, while all remaining coordinates are
binary. Consequently, the minimal face of the cube containing the optimal solution
has dimension exactly $d^*=k$. Each run is initialized from an independently sampled
uniform random vertex of the cube.

We compare our Algorithm \ref{alg:DICG} (without any modification) with the baselines listed in Table \ref{table:algorithms}.  Recall the hypercube is a product polytope and thus, per the discussion following Lemma \ref{lem:productMassTransfer}, we can replace the sparsity parameter $d^*$ with a universal constant and the worst-case complexity of our Algorithm \ref{alg:DICG} becomes $O\left({\frac{\beta{}D^2}{\alpha}\log(1/\epsilon)}\right)$, and hence independent of $k$.

\begin{table*}\renewcommand{\arraystretch}{1.3}
{\small
\begin{center}
  \begin{tabular}{|p{0.2\linewidth} | p{0.7 \linewidth}|} \hline
   algorithm &description and comments \\ \hline
   DI-Pairwise + ls &  decomposition-invariant CG with pairwise steps and exact line-search (variant PFW-2 in \cite{bashiri2017decomposition}). No convergence guarantee.\\ \hline
   DI-AFW + ls &  decomposition-invariant CG  with away steps and exact line-search (variant AFW-2 in \cite{bashiri2017decomposition}). $O\left({n(d^*+1)\frac{\beta{}D^2}{\alpha}\log(1/\epsilon)}\right)$ iteration complexity.\\ \hline
   Standard AFW &  the standard (decomposition-dependent) Frank-Wolfe with away steps and exact line-search of \cite{lacoste2015global}. $O\left({n\frac{\beta{}D^2}{\alpha}\log(1/\epsilon)}\right)$ iteration complexity. \\ \hline
  \end{tabular}
\caption{Description of baselines used in numerical experiments.}
  \label{table:algorithms}
\end{center}
}
\end{table*}\renewcommand{\arraystretch}{1} 

The results are given in Figure \ref{fig:cube}. Each plot is the average of 10 i.i.d. runs (both $\x^0$ and the initialization vertex resampled). We can clearly see that with the exception of the case $k=n=1000$, our Algorithm \ref{alg:DICG} indeed seems to be unaffected by the dimension of the optimal face $k$, and exhibits nearly identical convergece regardless of the value of $k$. We can also clearly see that, with the exception of the extreme cases $k=5$ and $k=n=1000$, Algorithm \ref{alg:DICG} significantly outperforms all baselines.

\begin{figure}[H]
     \centering
     \begin{subfigure}[b]{0.32\textwidth}
         \centering
         \includegraphics[width=\textwidth]{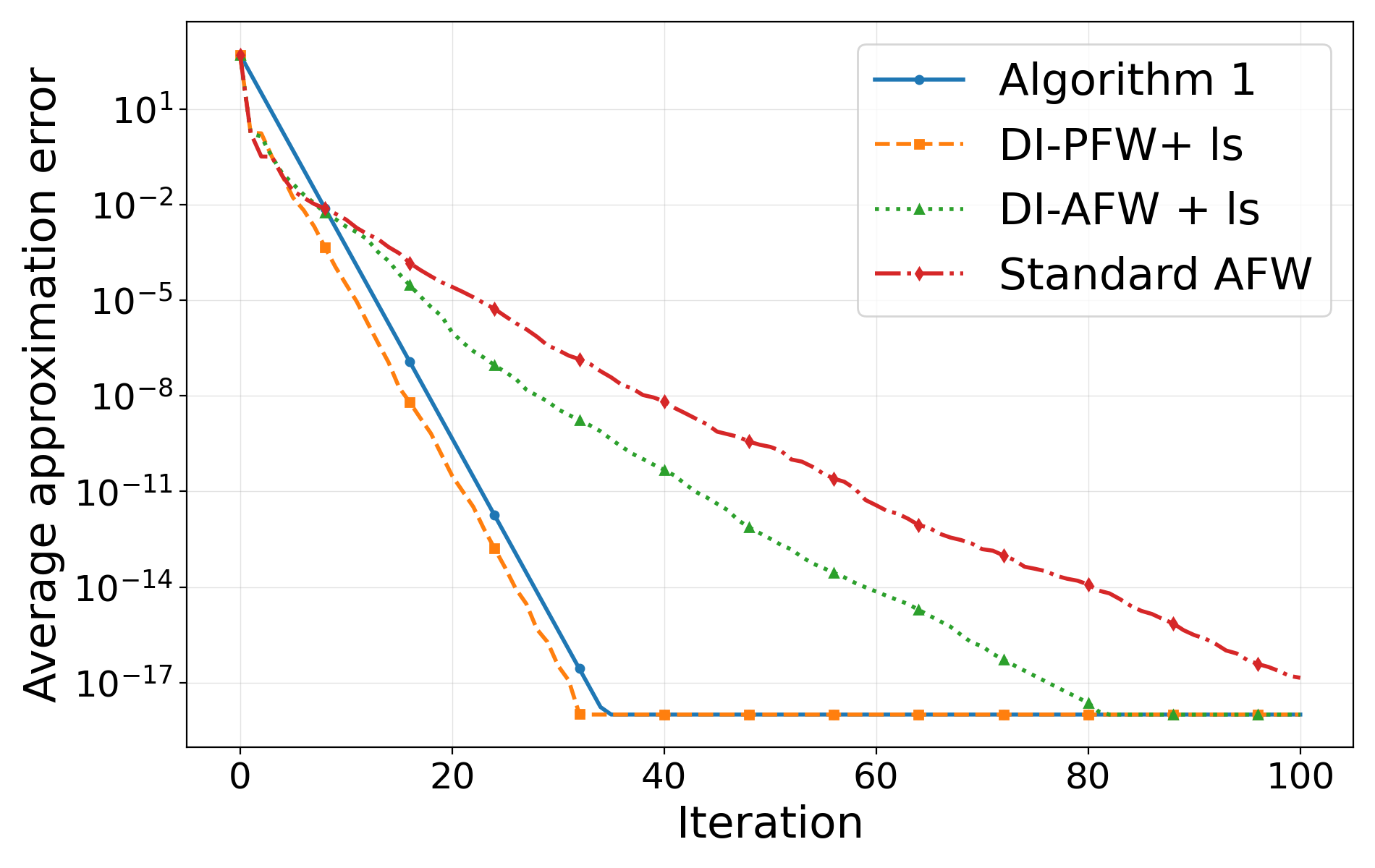}
         \caption*{$k=5$}
         \label{fig:y equals x}
     \end{subfigure}
     \hfill
     \begin{subfigure}[b]{0.32\textwidth}
         \centering
         \includegraphics[width=\textwidth]{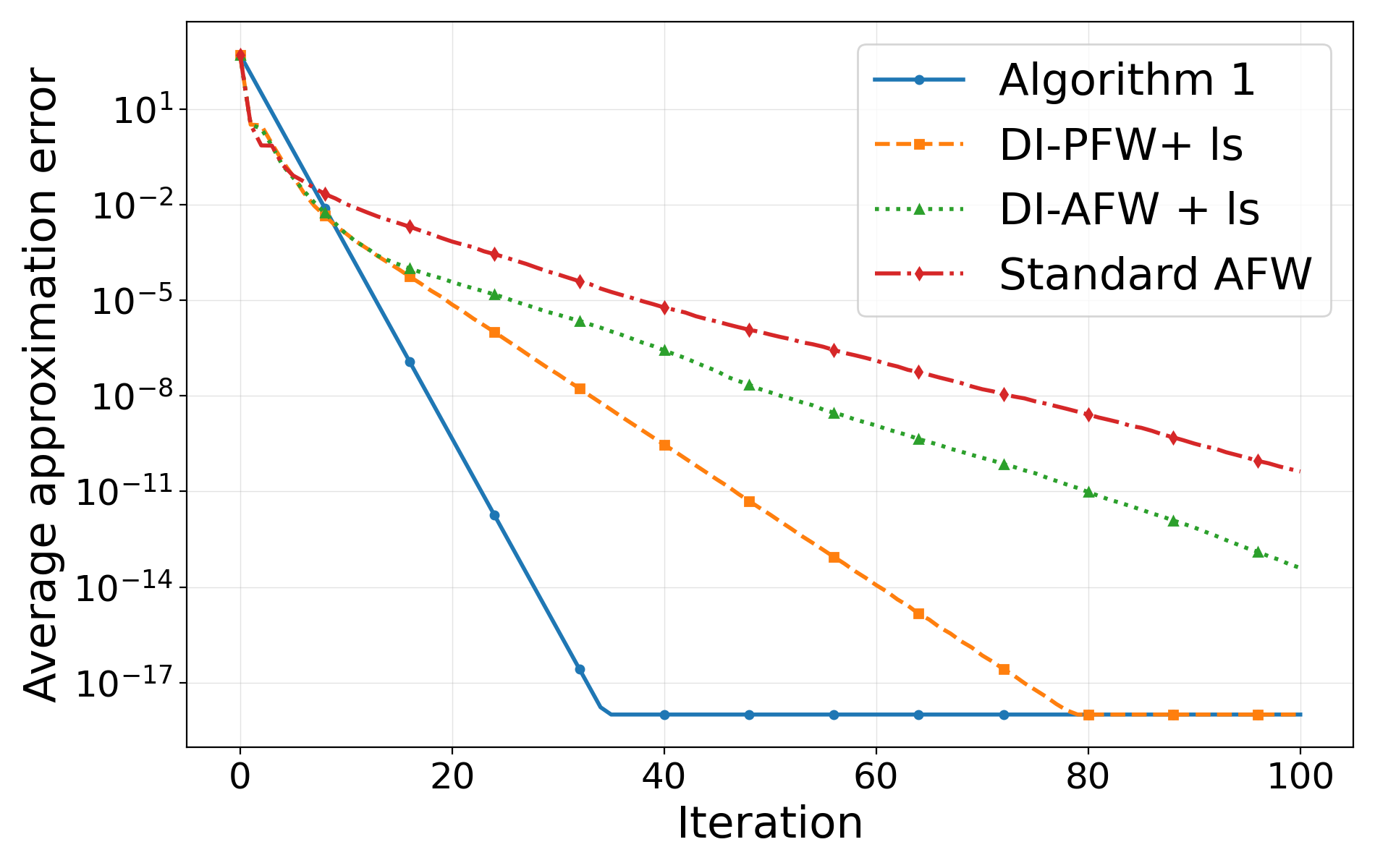}
         \caption*{$k=10$}
         \label{fig:three sin x}
     \end{subfigure}
     \hfill
     \begin{subfigure}[b]{0.32\textwidth}
         \centering
         \includegraphics[width=\textwidth]{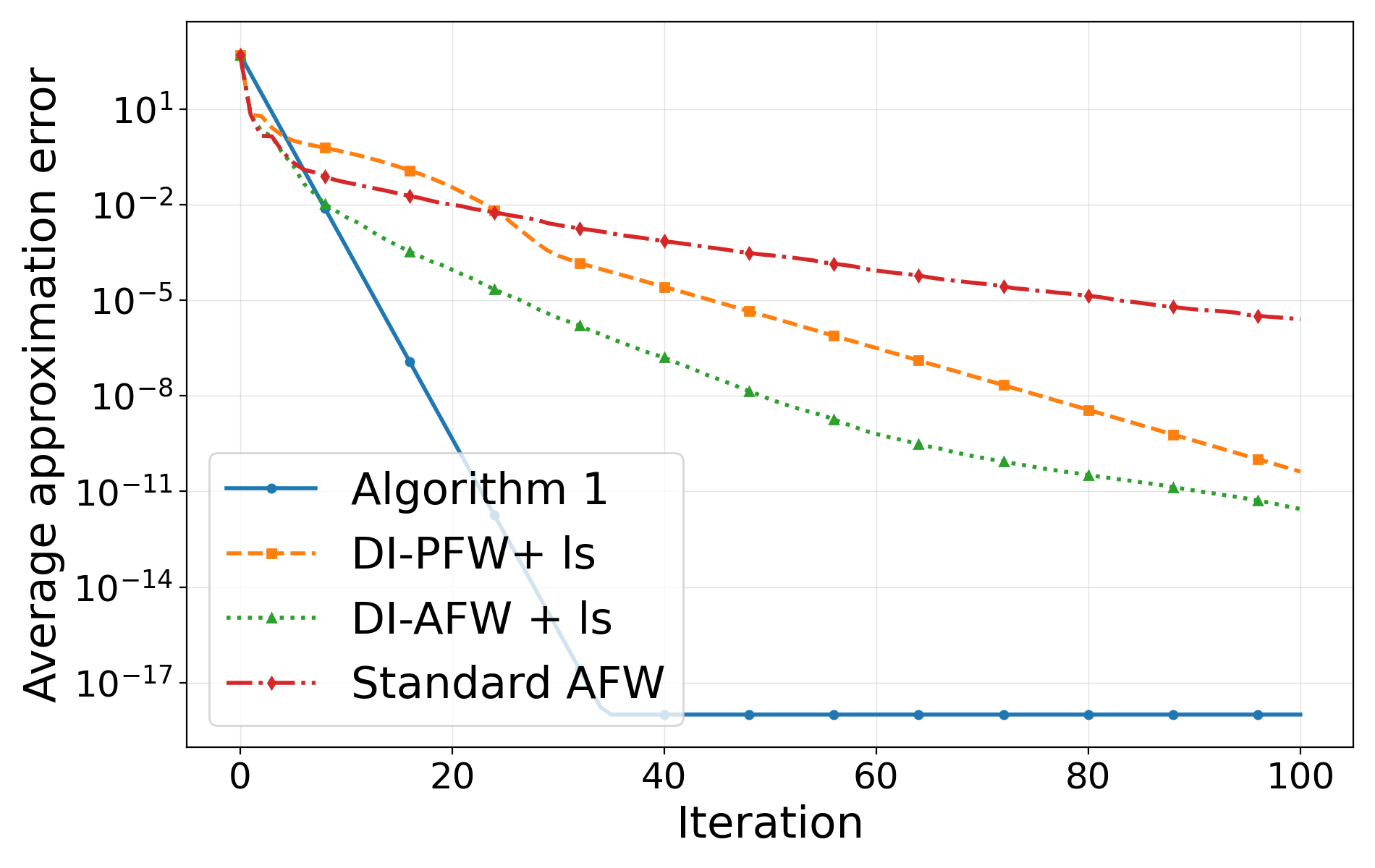}
         \caption*{$k=20$}
         \label{fig:five over x}
     \end{subfigure}

     \begin{subfigure}[b]{0.32\textwidth}
         \centering
         \includegraphics[width=\textwidth]{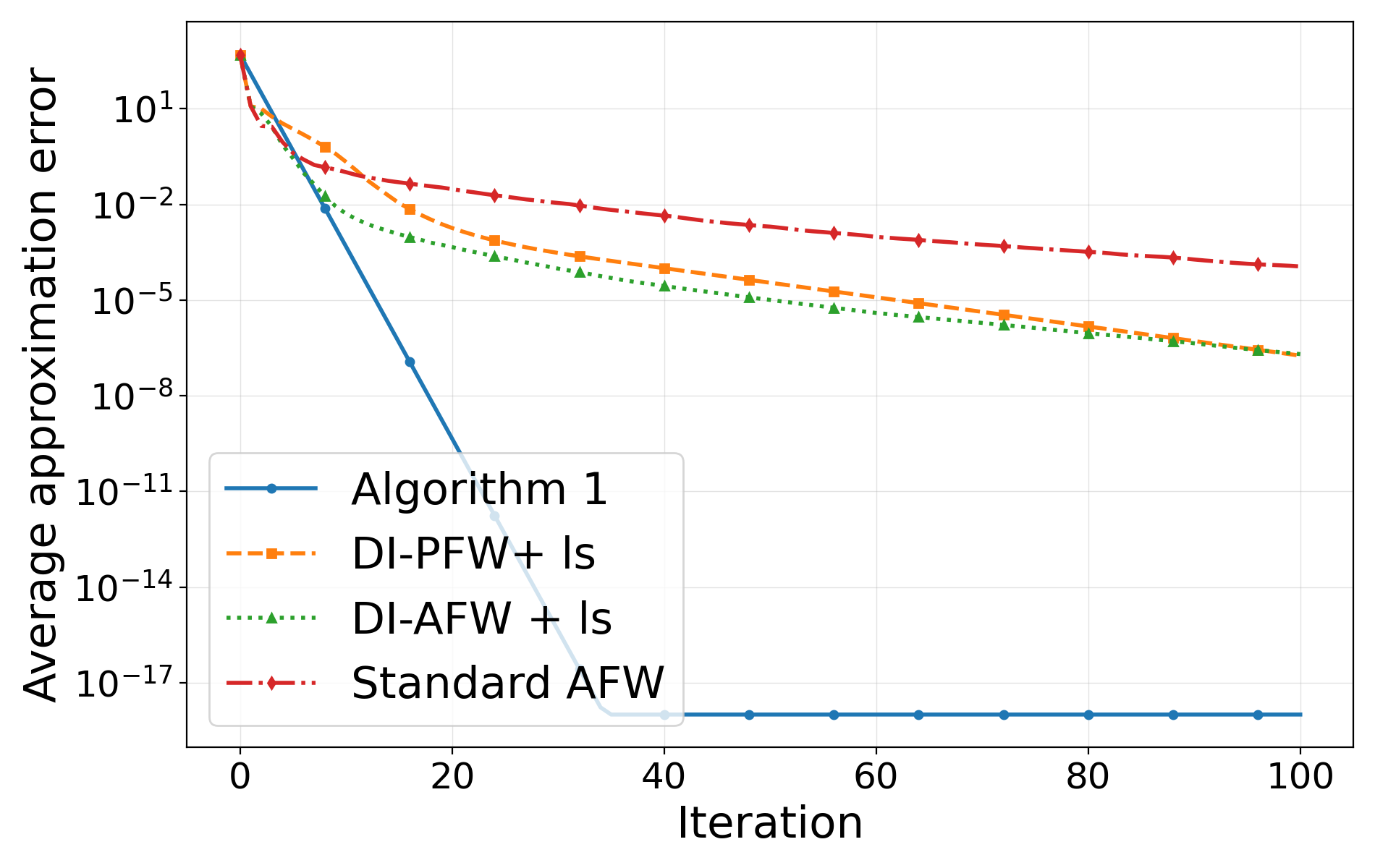}
         \caption*{$k=40$}
         \label{fig:y equals x}
     \end{subfigure}
     \hfill
     \begin{subfigure}[b]{0.32\textwidth}
         \centering
         \includegraphics[width=\textwidth]{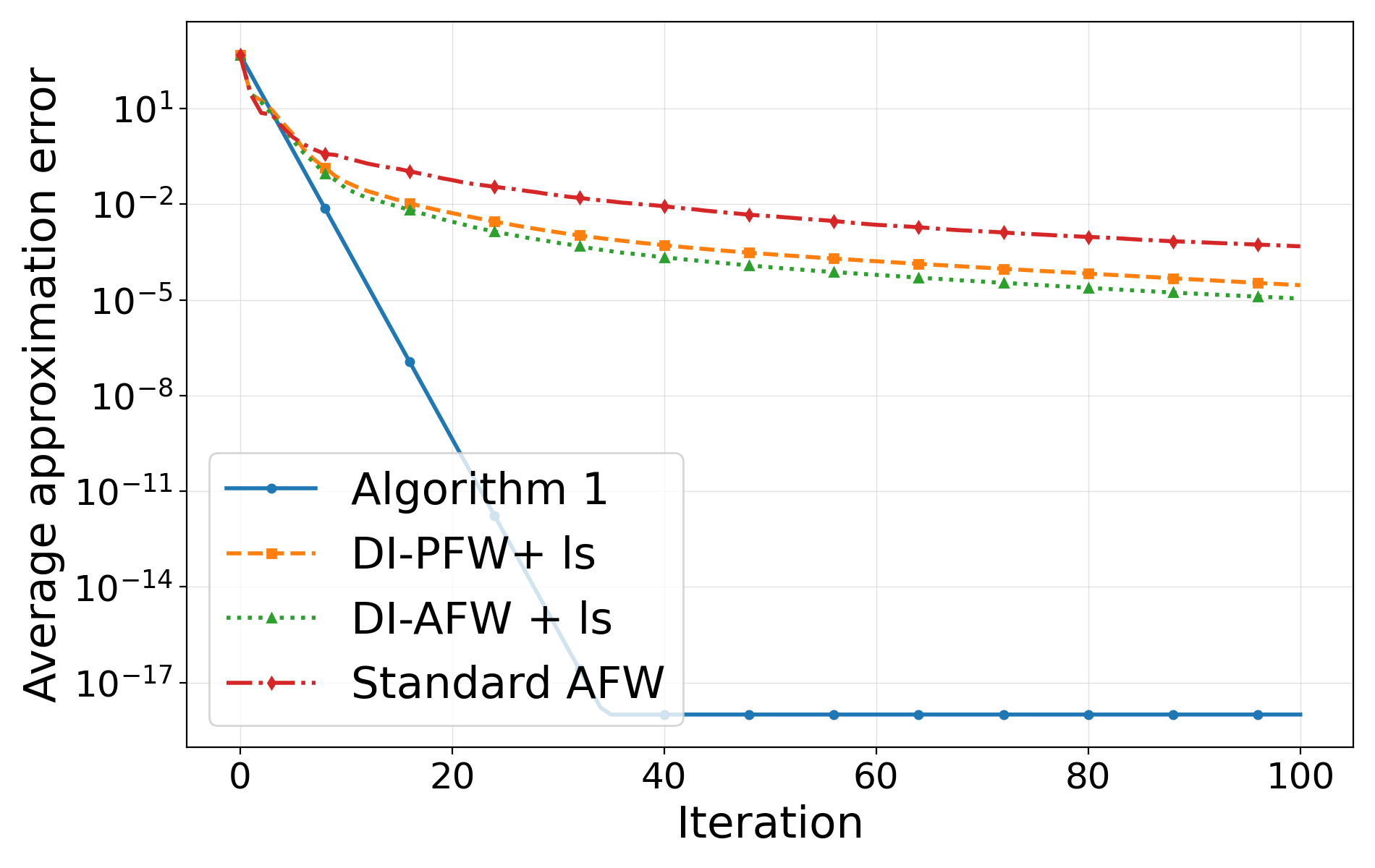}
         \caption*{$k=100$}
         \label{fig:three sin x}
     \end{subfigure}
     \hfill
     \begin{subfigure}[b]{0.32\textwidth}
         \centering
         \includegraphics[width=\textwidth]{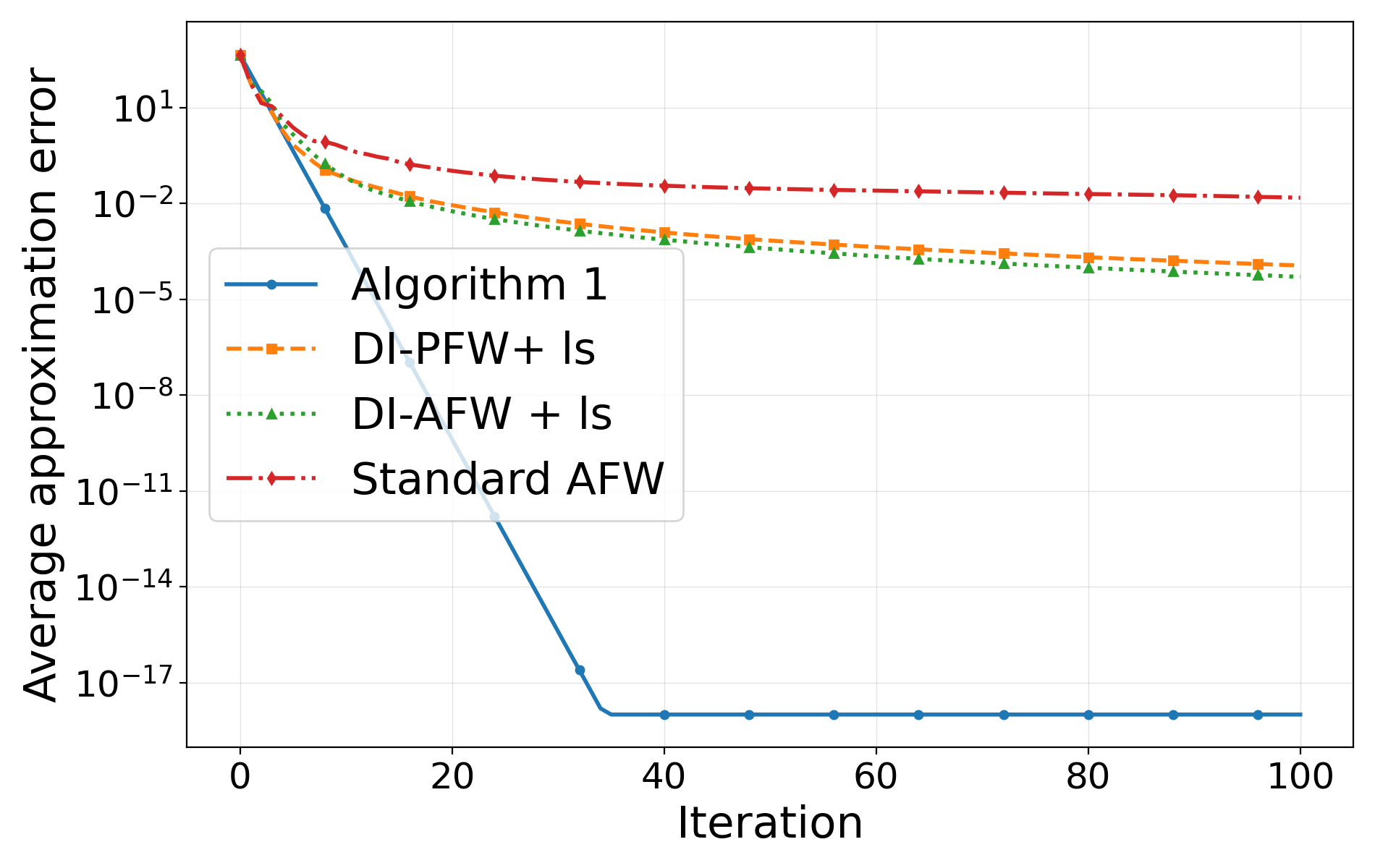}
         \caption*{$k=250$}
         \label{fig:five over x}
     \end{subfigure}

     \begin{subfigure}[b]{0.32\textwidth}
         \centering
         \includegraphics[width=\textwidth]{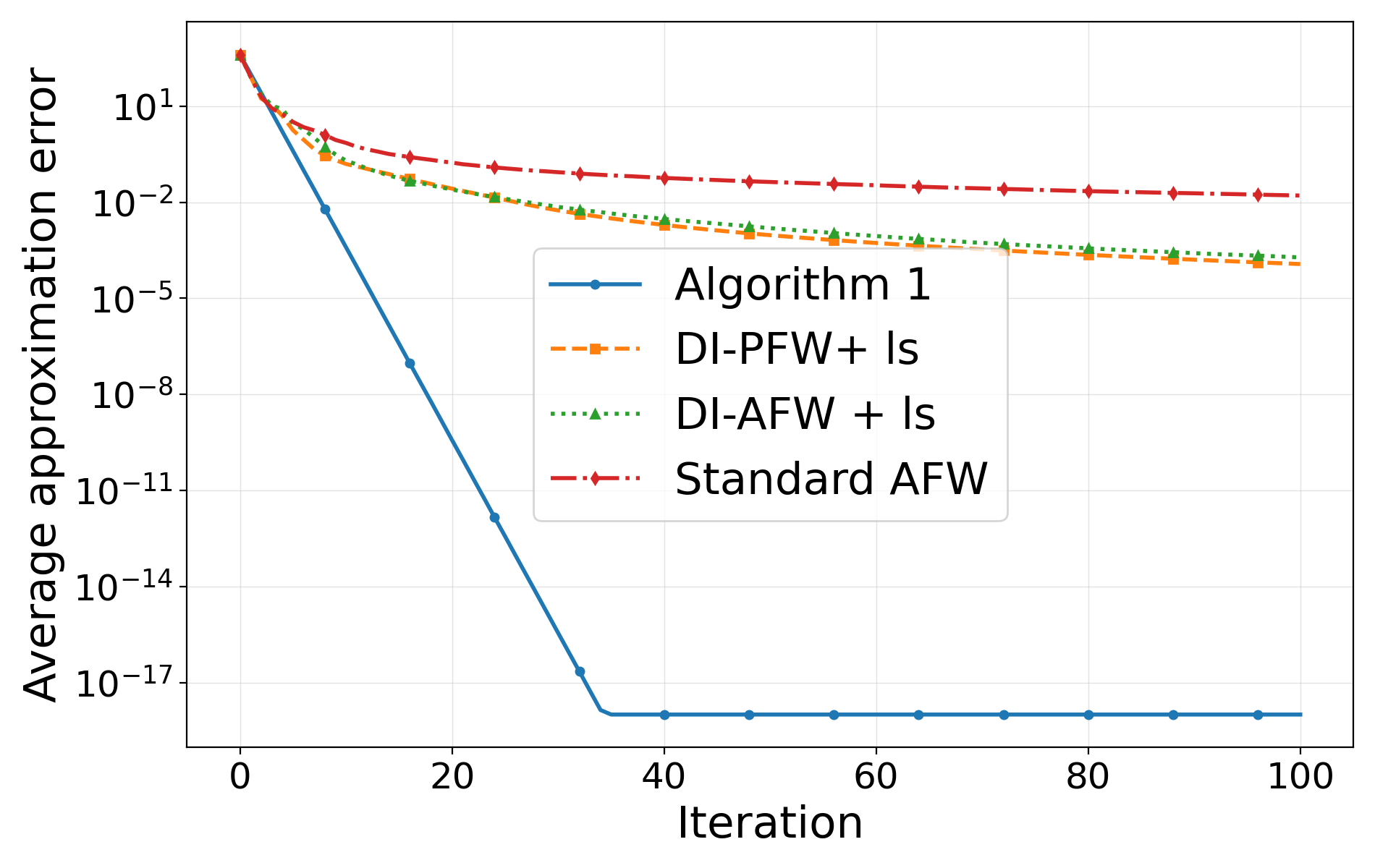}
         \caption*{$k=500$}
         \label{fig:y equals x}
     \end{subfigure}
     \hfill
     \begin{subfigure}[b]{0.32\textwidth}
         \centering
         \includegraphics[width=\textwidth]{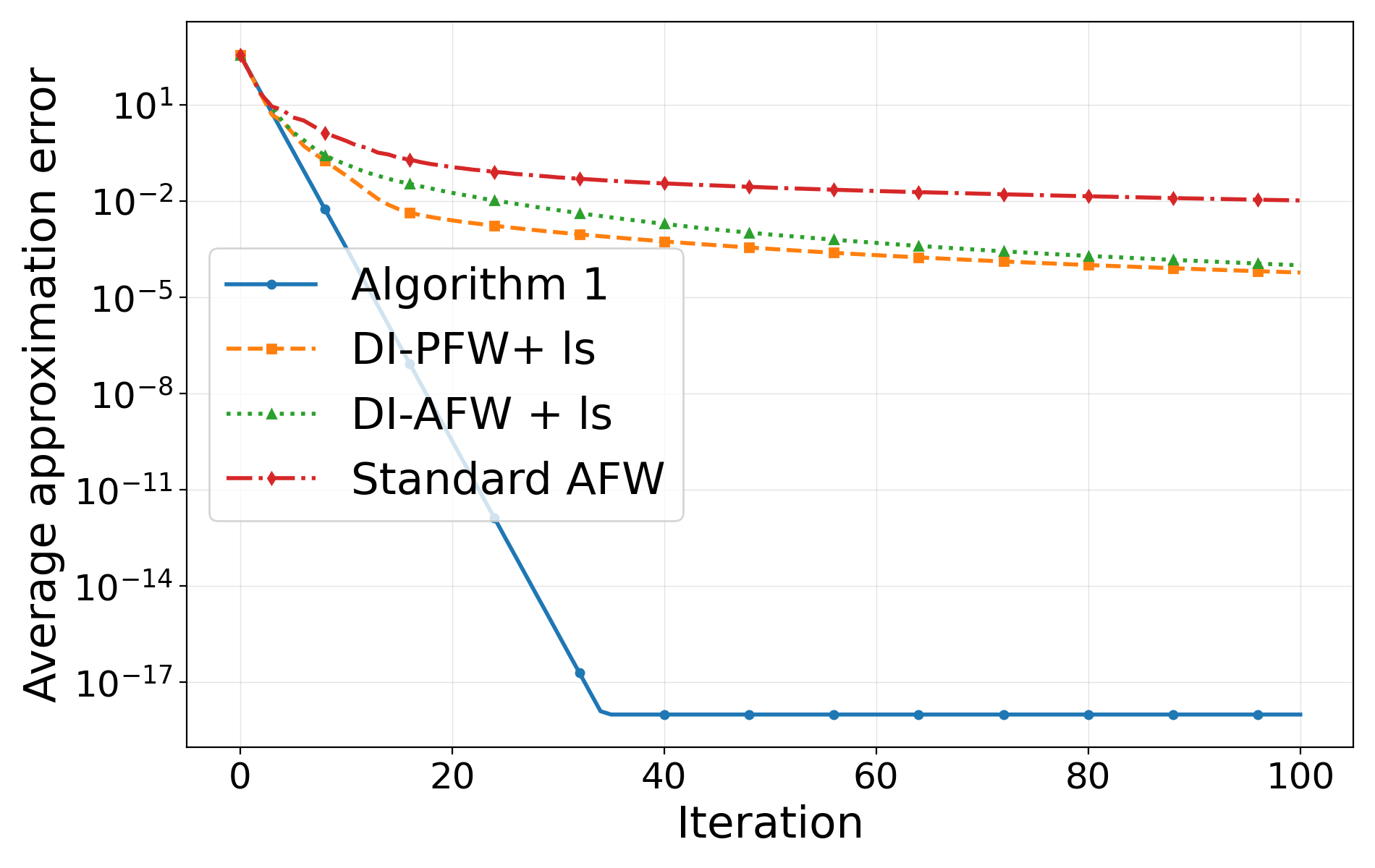}
         \caption*{$k=750$}
         \label{fig:three sin x}
     \end{subfigure}
     \hfill
     \begin{subfigure}[b]{0.32\textwidth}
         \centering
         \includegraphics[width=\textwidth]{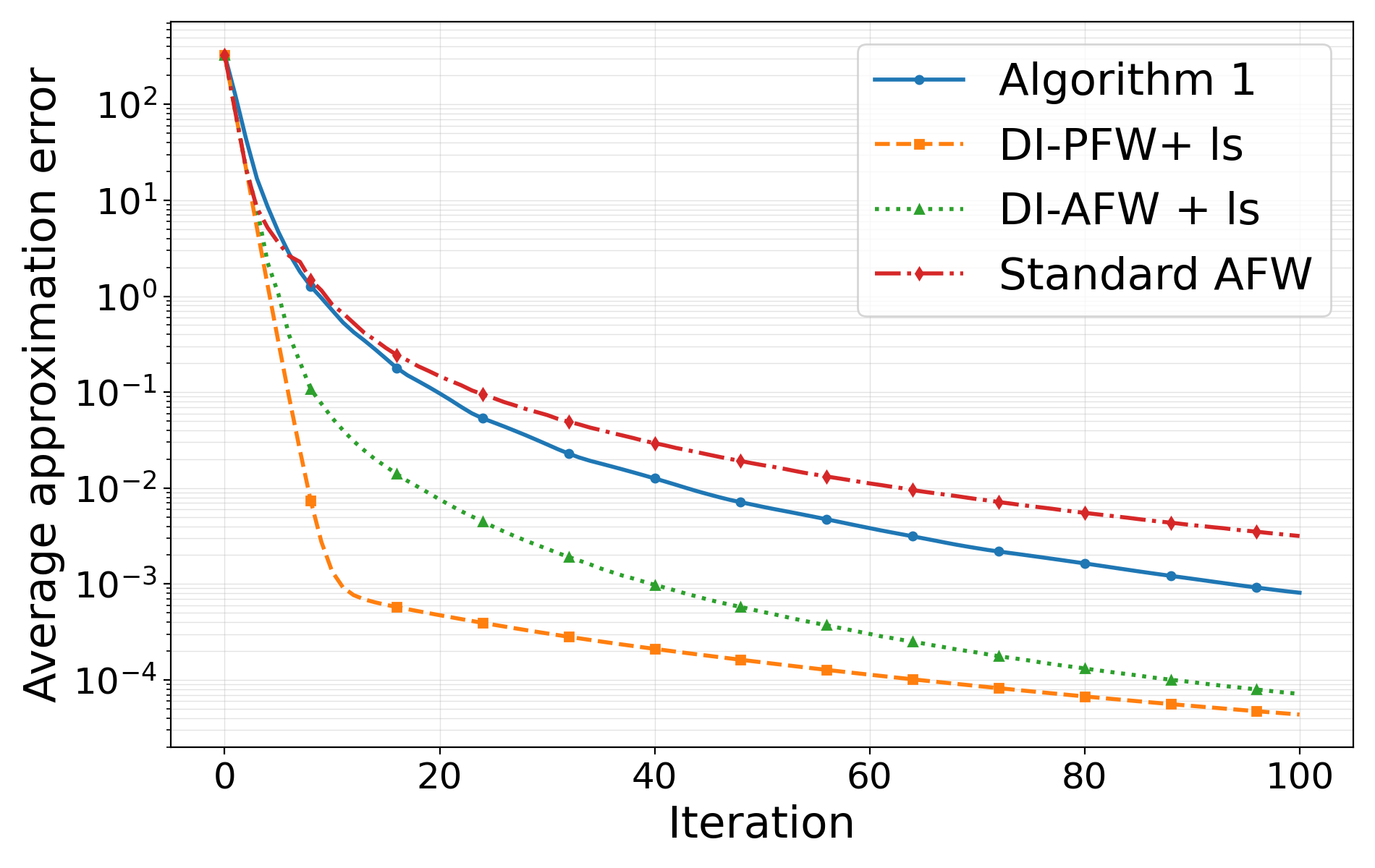}
         \caption*{$k=1000$}
         \label{fig:five over x}
     \end{subfigure}
        \caption{Approximation errors vs. number of iterations for Euclidean projection onto the unit cube.}
        \label{fig:cube}
\end{figure}

\subsection{Projection onto a product of truncated squares}
\label{subsec:numerics-truncated-cube}

We turn to provide a simple numerical illustration of Algorithm \ref{alg:hybrid} and compare its performance to the baselines listed in Table \ref{table:algorithms}. Algorithm \ref{alg:hybrid} is implemented exactly as described in Section \ref{sec:genPoly} with parameters $K_0 = 10, q = 2$ (we did not attempt to optimize these choices).

We next consider a second family of projection instances, whose feasible region is
not 2-level, but is still very close in structure to the unit cube. Throughout this
experiment we assume that \(n\) is even and write \(m=n/2\). We fix a rational number
\[
        \rho=\frac{r}{p}\in(0,1),
        \qquad r,p\in\mathbb{Z},\quad 1\le r<p .
\]
We consider the polytope
\[
\mP_{\rho}
:=
\left\{
\x\in[0,1]^n \ \bigg|\
\x({2j-1})+\x({2j})\le 1+\rho,\quad j=1,\ldots,m
\right\}.
\]
Equivalently, \(\mP_{\rho}\) is the Cartesian product of \(m\) identical two
dimensional polytopes
\[
\mQ_{\rho}:=
\{(u,w)\in[0,1]^2\mid u+w\le 1+\rho\},
\]
which is simply the unit square with the upper-right corner truncated.
Its vertices are
\[
(0,0),\qquad
(1,0),\qquad
(0,1),\qquad
(1,\rho),\qquad
(\rho,1).
\]
Thus, for \(\rho\in(0,1)\), the polytope is not 2-level: a positive coordinate of
a vertex can take both the value \(\rho\) and the value \(1\).

The optimization problem is again a Euclidean projection problem:
\[
        \min_{\x\in\mP_{\rho}} f(\x)
        :=
        \frac{1}{2}\|\x-\x^0\|^2 ,
\]
where the target point \(\x^0\) is feasible and thus,  \(f^*=0\) and the unique
optimal solution is \(\x^0\).

In order to match the representation \eqref{eq:polyStruct}, we use a scaled lifted
formulation. We write $\y=p\x$ ,
and introduce slack variables \(\s\in\reals^n\) and \(\tau\in\reals^m\). The lifted
polytope is
\[
\widehat{\mP}_{r,p}
:=
\left\{
(\y,\s,\tau)\in\reals^{2n+m}
\ \middle|\
\begin{array}{ll}
\y(i)+\s(i)=p, & i=1,\ldots,n,\\
\y({2j-1})+\y({2j})+\tau(j)=p+r, & j=1,\ldots,m,\\
\y,\s,\tau\ge 0
\end{array}
\right\}.
\]
The original variable is recovered as \(\x=\y/p\). Thus, the lifted problem takes the form:
\[
        \min_{(\y,\s,\tau)\in\widehat{\mP}_{r,p}}
        \widehat f(\y,\s,\tau)
        :=
        \frac{1}{2}\left\|\frac{1}{p}\y-\x^0\right\|^2 .
\]

The face structure remains completely explicit. In one block, the possible faces
are the whole truncated square \(\mQ_{\rho}\), its five edges, and its five vertices.
Consequently, every face of \(\mP_{\rho}\) is a product of block faces. In the lifted
implementation, the minimal face \(\mF(\z)\) of a point
\(\z=(\y,\s,\tau)\in\widehat{\mP}_{r,p}\) is determined exactly by the zero
coordinates of \(\y,\s,\tau\).

The target point \(\x^0\) is sampled so that the parameter \(k\) is exactly the
dimension of the optimal face. We first choose \(k\) blocks uniformly at random.
On each selected block \(j\), we sample a point in the relative interior of the
truncated edge
\[
        \x({2j-1})+\x({2j})=1+\rho .
\]
Concretely, for a small margin parameter \(\mu>0\), we draw
\[
        \x^0({2j-1})\sim \mathrm{Unif}(\rho+\mu,1-\mu),
        \qquad
        \x^0({2j})=1+\rho-\x^0({2j-1}).
\]
All remaining blocks are sampled independently as vertices of \(\mQ_{\rho}\).
Thus, the minimal face containing \(\x^0\) is the product of \(k\) one-dimensional
truncated edges and \(m-k\) singleton vertices. Since the minimizer is unique and
equal to \(\x^0\), it follows that indeed $d^*= k$.

Note that linear optimization over \(\mP_{\rho}\) is simply separable over the blocks whose vertices are described above. Note that here as well, the polytope is a product of polytopes, each of dimension $O(1)$ and thus, per Lemma \ref{lem:productMassTransfer}, we can replace the sparsity parameter $d^*$ with a universal constant.

We set $n=1000$, $r = 3, p=7$ and $\mu = 0.1$. The results are given in Figure \ref{fig:t_cube}. Each plot is the average of 10 i.i.d. runs (both $\x^0$ and the initialization vertex resampled). Since our Algorithm \ref{alg:hybrid} is a double-loop algorithm, we measure the approximation error vs. number of inner iterations, i.e., total number of iterations executed by both the standard conditional gradient method and Algorithm \ref{alg:awayDICG}.

We can see the expected alternating behaviour of our Algorithm \ref{alg:hybrid}, how it switches between slowly converging phases of the standard conditional gradient method and the typically faster converging phases of our Algorithm \ref{alg:awayDICG}. We can also clearly observe that when $k$ (which equals the sparsity parameter $d^*$) is relatively small,  our Algorithm \ref{alg:hybrid}, without any tuning or further design optimizations, clearly outperforms the baselines. For larger values, the advantage is less obvious. While, as discussed above, $k$ should not affect the convergence of the inner Algorithm \ref{alg:awayDICG} (at least once in the proximity of the optimal set), clearly the overall performance of the alternating scheme in Algorithm \ref{alg:hybrid} is affected.

\begin{figure}[H]
     \centering
     \begin{subfigure}[b]{0.32\textwidth}
         \centering
         \includegraphics[width=\textwidth]{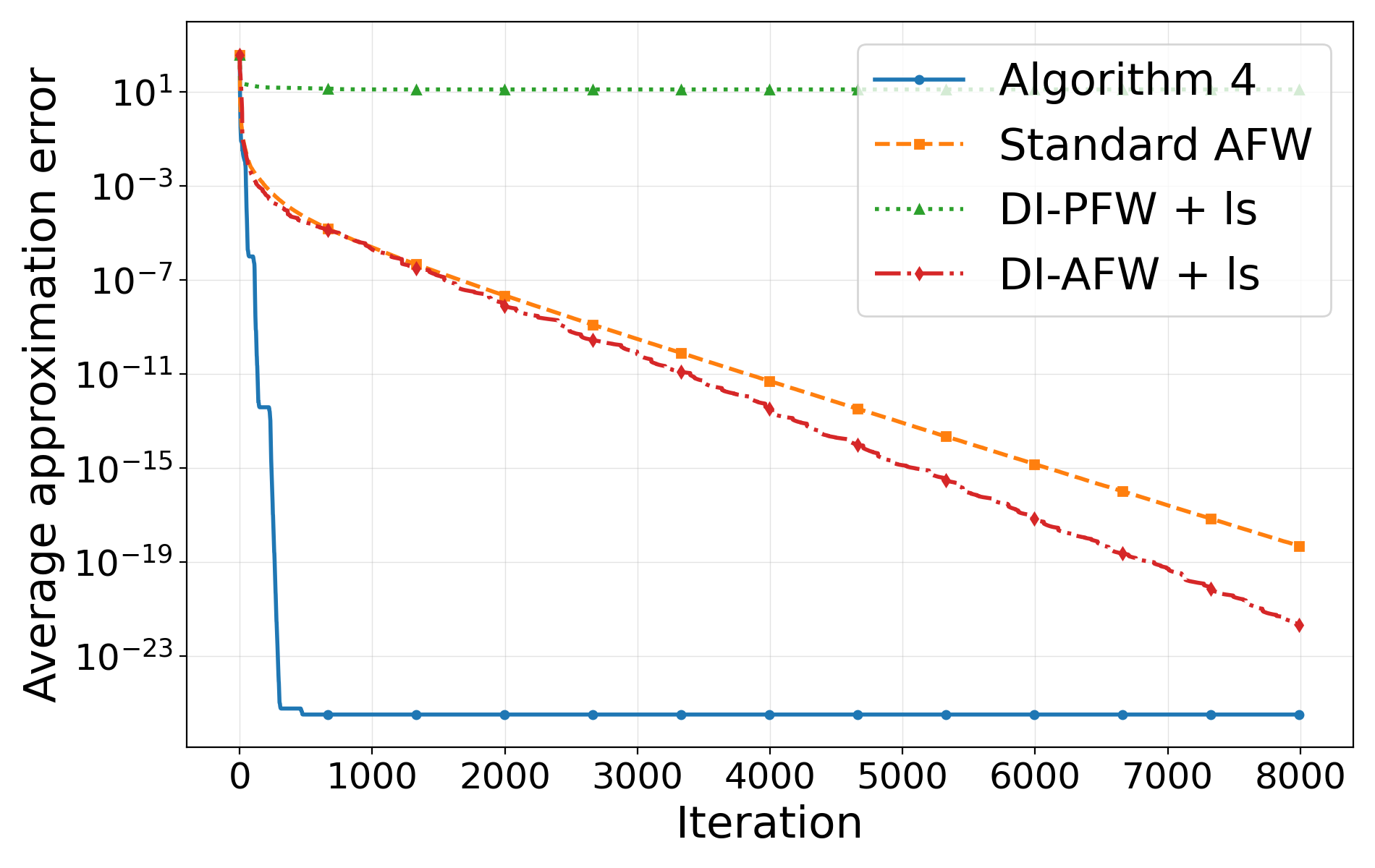}
         \caption*{$k=10$}
         \label{fig:y equals x}
     \end{subfigure}
     \hfill
     \begin{subfigure}[b]{0.32\textwidth}
         \centering
         \includegraphics[width=\textwidth]{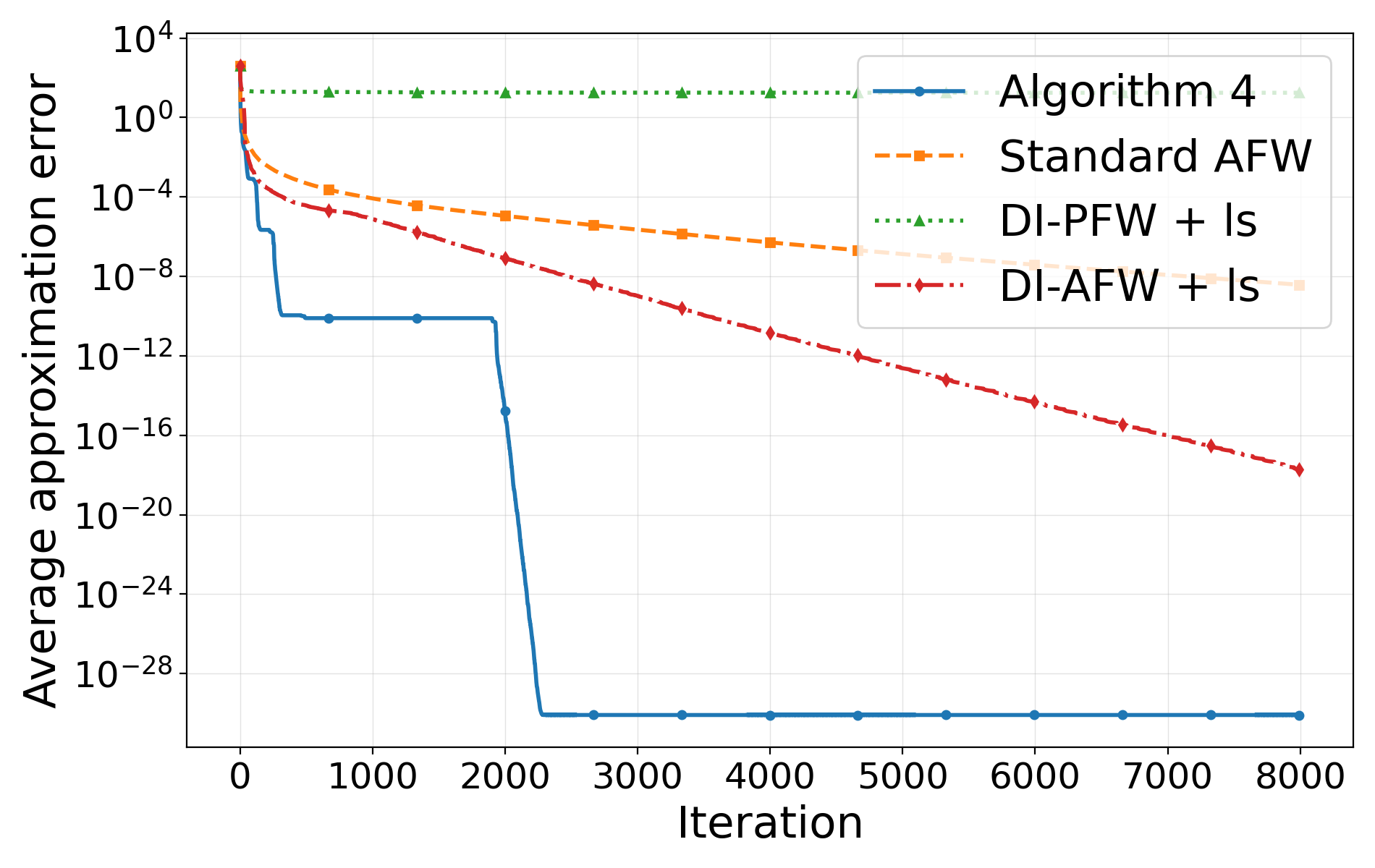}
         \caption*{$k=25$}
         \label{fig:three sin x}
     \end{subfigure}
     \hfill
     \begin{subfigure}[b]{0.32\textwidth}
         \centering
         \includegraphics[width=\textwidth]{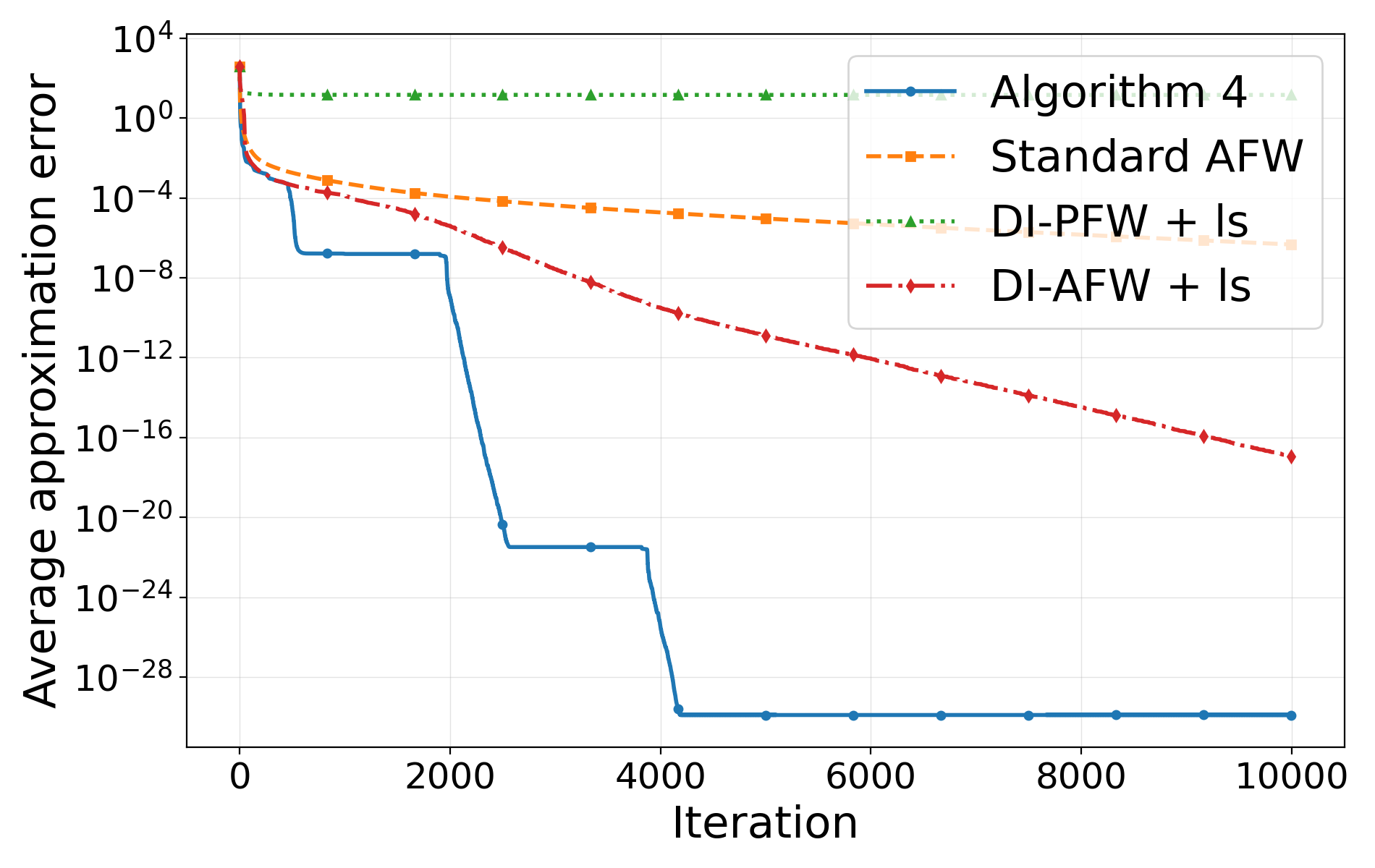}
         \caption*{$k=50$}
         \label{fig:five over x}
     \end{subfigure}

     \begin{subfigure}[b]{0.32\textwidth}
         \centering
         \includegraphics[width=\textwidth]{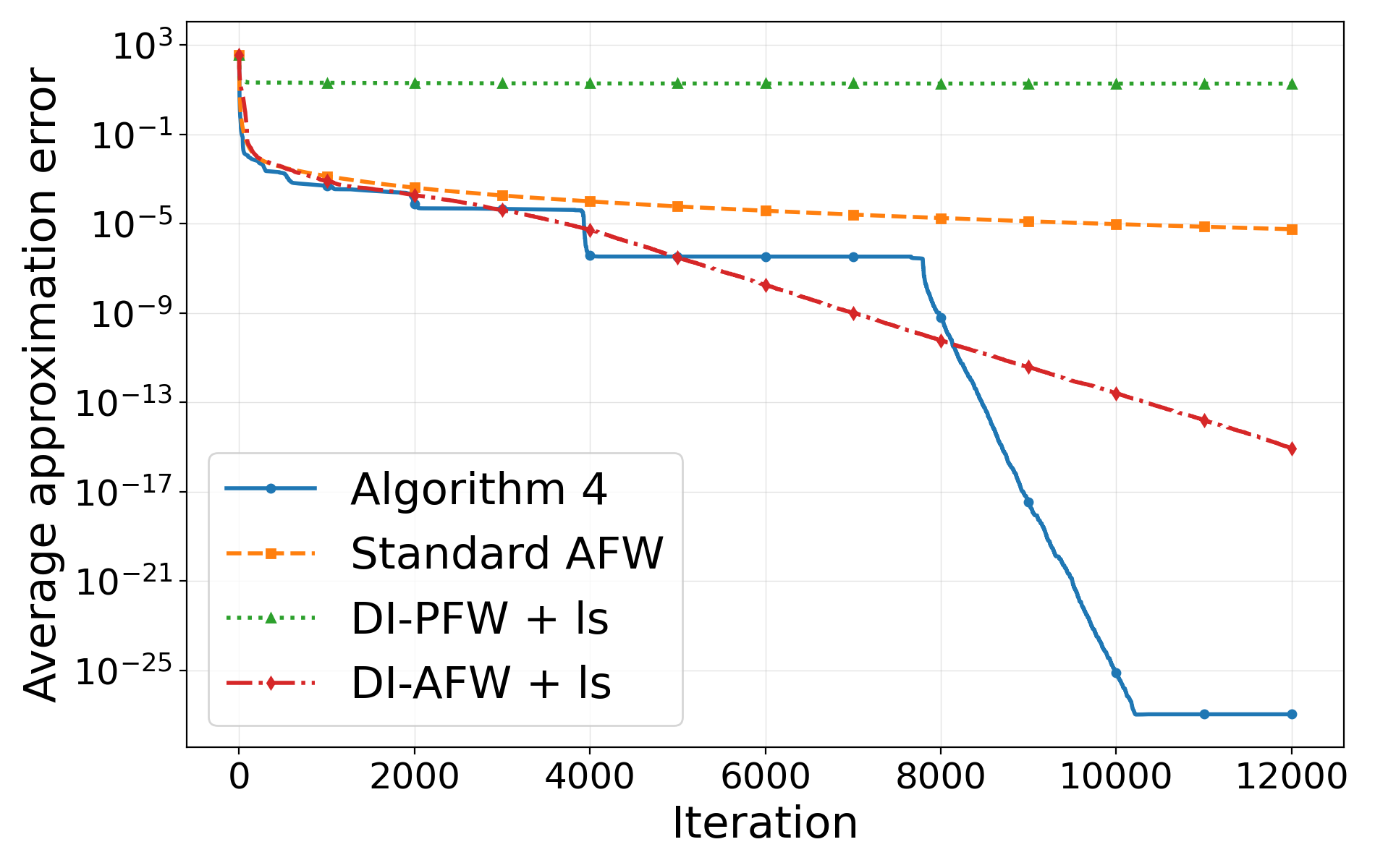}
         \caption*{$k=100$}
         \label{fig:y equals x}
     \end{subfigure}
     \hfill
     \begin{subfigure}[b]{0.32\textwidth}
         \centering
         \includegraphics[width=\textwidth]{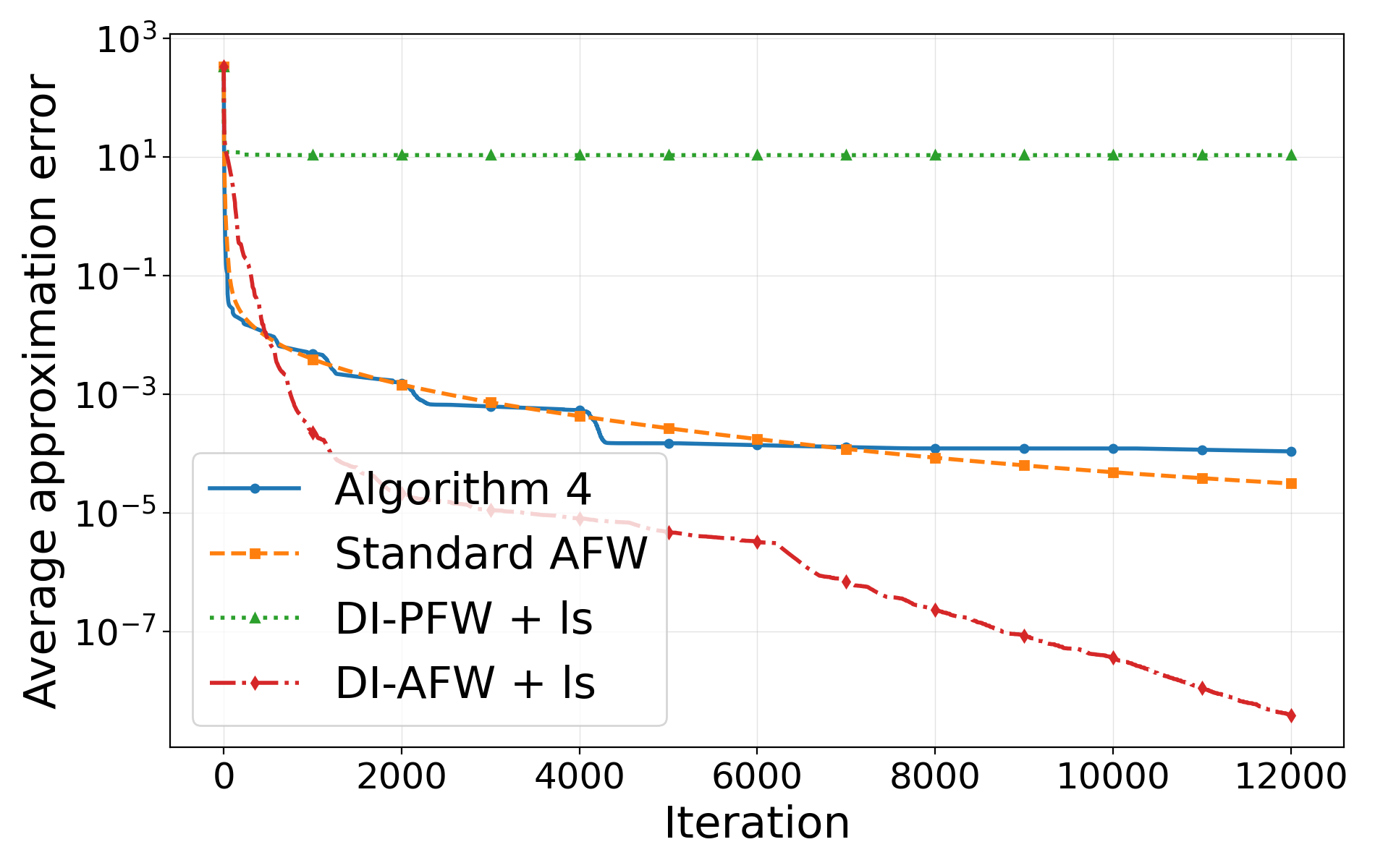}
         \caption*{$k=250$}
         \label{fig:three sin x}
     \end{subfigure}
     \hfill
     \begin{subfigure}[b]{0.32\textwidth}
         \centering
         \includegraphics[width=\textwidth]{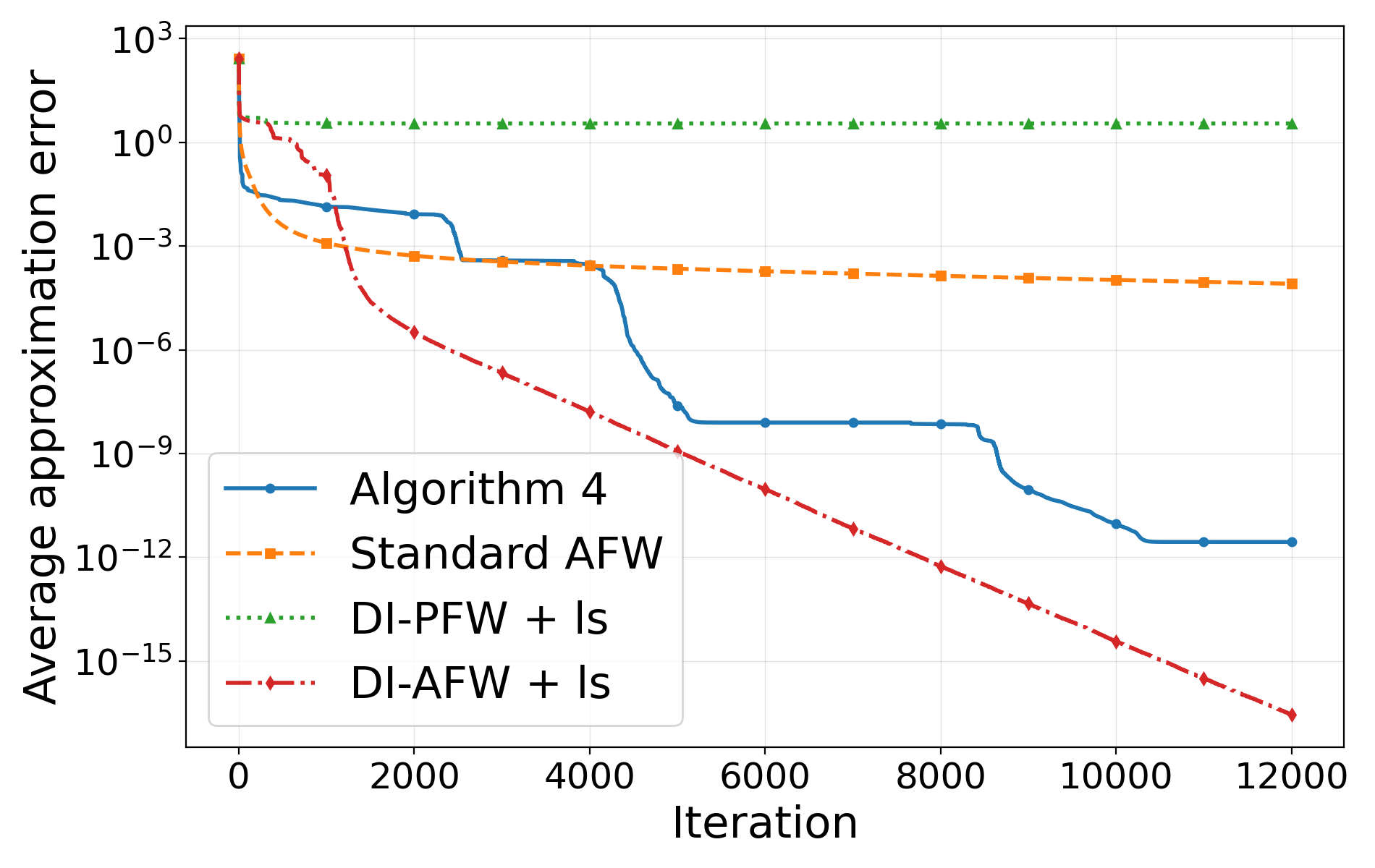}
         \caption*{$k=500$}
         \label{fig:five over x}
     \end{subfigure}
        \caption{Approximation errors vs. number of iterations for Euclidean projection onto the product of truncated squares.}
        \label{fig:t_cube}
\end{figure}

\section{Acknowledgments}
This work was funded by the European Union (ERC,  ProFreeOpt, 101170791). Views and opinions expressed are however those of the author(s) only and do not necessarily reflect those of the European Union or the European Research Council Executive Agency. Neither the European Union nor the granting authority can be held responsible for them.

\appendix
\section{Proof of Lemma \ref{lem:dicg:feas}}
\begin{proof}[Proof of Lemma \ref{lem:dicg:feas}]
For every iteration $t$, let $\bar{\eta}_t$ denote the value of the variable
$\eta$ after the backtracking step on iteration $t$. Thus, the actual step-size
used by the algorithm is $\eta_t=\bar{\eta}_t/2$. By construction,
$\bar{\eta}_t$ is always a power of two, and the sequence
$\{\bar{\eta}_t\}_{t\geq 1}$ is monotone non-increasing. Hence, there
exist integers $\delta_t\geq 1$ such that $\eta_t=2^{-\delta_t}$,
and $\delta_{t+1}\geq \delta_t$ for all $t$.

We first note the following simple fact. Suppose that $\x_t$ is feasible, and
that for every $i\in\mI$ for which $\x_t(i)>0$, it holds that
$\x_t(i)\geq \eta_t$. Then, $\x_{t+1}$ is also feasible. Indeed, since
$\v_{t,-}\in\mV\cap\mF(\x_t)$, it follows that for every $i\in\mI$ such that
$\x_t(i)=0$, we have $\v_{t,-}(i)=0$. Thus, since 
$\v(i)\in\{0,1\}$ for every vertex $\v\in\mV$ and every $i\in\mI$, subtracting
$\eta_t\v_{t,-}$ from $\x_t$ cannot make any coordinate in $\mI$ negative.
Adding $\eta_t\v_{t,+}$ also cannot make any coordinate in $\mI$ negative.
Finally, since $\x_t,\v_{t,+},\v_{t,-}\in\mP$, we have
\[
\A\x_{t+1}
=
\A\x_t+\eta_t\A(\v_{t,+}-\v_{t,-})
=
\b.
\]
Thus, $\x_{t+1}\in\mP$.

We are going to prove by induction that on each iteration $t$, after the
step-size $\eta_t$ has been fixed, there exists a nonnegative integer-valued
vector $s_t$, indexed by $\mI$, such that
\[
\x_t(i)=2^{-\delta_t}s_t(i) \qquad \forall i\in\mI .
\]
The base case $t=1$ holds since $\x_1$ is a vertex of $\mP$, and hence, by the
2-level assumption, $\x_1(i)\in\{0,1\}$ for all $i\in\mI$. Since
$\delta_1\geq 1$, there indeed exists a non-negative integer-valued vector
$s_1$ such that $\x_1(i)=2^{-\delta_1}s_1(i)$ for all $i\in\mI$.

Suppose now that the induction hypothesis holds for some iteration $t$. Then,
for every $i\in\mI$ such that $\x_t(i)>0$, we have $s_t(i)\geq 1$, and hence
$\x_t(i)\geq 2^{-\delta_t}=\eta_t$. Thus, by the observation above,
$\x_{t+1}$ is feasible.

It remains only to verify the induction hypothesis for the next iterate. For
every $i\in\mI$, since $\v_{t,+}(i),\v_{t,-}(i)\in\{0,1\}$, and since
$\v_{t,-}\in\mF(\x_t)$, we have
\[
\x_{t+1}(i)
=
2^{-\delta_t}
\begin{cases}
s_t(i) & \v_{t,+}(i)=\v_{t,-}(i),\\
s_t(i)-1 & \v_{t,+}(i)=0,\ \v_{t,-}(i)=1,\\
s_t(i)+1 & \v_{t,+}(i)=1,\ \v_{t,-}(i)=0.
\end{cases}
\]
In the second case, $s_t(i)\geq 1$, since $\v_{t,-}(i)=1$ implies
$\x_t(i)>0$. Therefore, $\x_{t+1}(i)=2^{-\delta_t}\widetilde{s}_{t+1}(i)$ for
some non-negative integer-valued vector $\widetilde{s}_{t+1}$, indexed by
$\mI$. Since $\delta_{t+1}\geq\delta_t$, the number
$2^{\delta_{t+1}-\delta_t}$ is a positive integer. Setting
\[
s_{t+1}(i)=2^{\delta_{t+1}-\delta_t}\widetilde{s}_{t+1}(i)
\qquad \forall i\in\mI,
\]
we get
\[
\x_{t+1}(i)=2^{-\delta_{t+1}}s_{t+1}(i)
\qquad \forall i\in\mI.
\]
Thus, the induction also holds for iteration $t+1$, and the proof follows.
\end{proof}

\bibliography{bibs}
\bibliographystyle{plain}

\end{document}